\newtheorem{theorem}{Theorem}[section]
\newtheorem{lemma}{Lemma}[section]
\newtheorem{remark}{Remark}[section]
\newtheorem{corollary}{Corollary}[section]
\numberwithin{equation}{section}
\begin{document}
	
\title{Further numerical radius inequalities}
\author{Mohammad Sababheh, Cristian Conde, and Hamid Reza Moradi}
\subjclass[2010]{Primary 47A12, 47A30, Secondary 47B15, 15A60}
\keywords{Numerical radius, operator norm, inequality}

\begin{abstract}
In this article, we present some new inequalities for the numerical radius of products of Hilbert space operators and the generalized Aluthge transform. In particular, we show some upper bounds for $\omega(ABC+DEF)$ using the celebrated Buzano inequality, then some consequences that generalize some results from the literature are discussed. After that, inequalities that involve the generalized Aluthge transform are shown using some known bounds for the numerical radius of the product of two operators.
\end{abstract}

\maketitle
\pagestyle{myheadings}
\markboth{\centerline {}}
{\centerline {}}
\bigskip
\bigskip
\section{Introduction}
Given a complex Hilbert space $\mathcal{H}$, endowed with the inner product $\left<\cdot,\cdot\right>$, we use $\mathcal{B}(\mathcal{H})$ to denote the $C^*$-algebra of all bounded linear operators on $\mathcal{H}$. For $A\in\mathcal{B}(\mathcal{H}),$ the numerical radius and the operator norm of $A$ are defined respectively by
$$\omega(A)=\sup_{\|x\|=1}\left|\left<Ax,x\right>\right|\;{\text{and}}\;\|A\|=\sup_{\|x\|=1}\|Ax\|.$$ It is well known that if $A$ is normal, in the sense that $A^*A=AA^*$, then $\|A\|=\omega(A)$. However, for non-normal operators, this equality fails. In general, the following holds for any $A\in\mathcal{B}(\mathcal{H})$:
\begin{equation}\label{eq_equivalent}
\frac{1}{2}\|A\|\leq \omega(A)\leq \|A\|.
\end{equation} 
The importance of this inequality is the way it approximates $\omega(A)$ in terms of $\|A\|$; a more manageable quantity to compute than $\omega(A)$. Sharpening \eqref{eq_equivalent} and other inequalities for the numerical radius has been the interest of numerous researchers in the past few years; see \cite {drag_1,E3,sab_czech,moradi1, sab_laa, sab_lama, BBP,1,sh}
 for example.

Among the most interesting bounds, we have the following lemma.
\begin{lemma}\label{lem_kitt}\cite{Ki05,Ki2}
Let $A,B\in\mathcal{B}(\mathcal{H})$. Then 
\begin{equation}\label{10}
\omega \left( AB \right)\le \frac{1}{2}\left\| {{\left| {{A}^{*}} \right|}^{2}}+{{\left| B \right|}^{2}} \right\|,
\end{equation}
$$\omega^2(A)\leq\frac{1}{2}\|\;|A|^2+|A^*|^2\;\|,$$
$$\omega(A)\leq\frac{1}{2}\|\;|A|+|A^*|\;\|,$$
and
\begin{equation}\label{16}
\omega(A)\leq\frac{1}{2}\left(\|A\|+\|A^2\|^{\frac{1}{2}}\right).
\end{equation}
\end{lemma}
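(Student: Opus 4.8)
The plan is to strip away the rotational freedom in the numerical radius and reduce the assertion to a single inequality for the self-adjoint real part of $A$. I would start from the standard identity $\omega(A)=\sup_{\theta\in\mathbb{R}}\|\mathrm{Re}(e^{i\theta}A)\|$, where $\mathrm{Re}(X)=\tfrac12(X+X^{*})$, together with the observation that both $\|A\|$ and $\|A^{2}\|$ are invariant under the substitution $A\mapsto e^{i\theta}A$, since $\|e^{i\theta}A\|=\|A\|$ and $\|(e^{i\theta}A)^{2}\|=\|e^{2i\theta}A^{2}\|=\|A^{2}\|$. Consequently it suffices to prove the rotation-free statement
\begin{equation*}
\|\mathrm{Re}(A)\|\le\tfrac12\|A\|+\tfrac12\|A^{2}\|^{1/2}\qquad\text{for every }A\in\mathcal{B}(\mathcal{H}),
\end{equation*}
and then to apply it to $e^{i\theta}A$ and take the supremum over $\theta$, which returns \eqref{16} at once.

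To attack the reduced inequality I would exploit that, for the self-adjoint operator $H=\mathrm{Re}(A)$, one has $\|H\|^{2}=\|H^{2}\|$ together with the algebraic identity
\begin{equation*}
H^{2}=\tfrac14\bigl(A^{*}A+AA^{*}\bigr)+\tfrac12\,\mathrm{Re}(A^{2}).
\end{equation*}
Choosing (replacing $A$ by $-A$ if necessary) a unit vector $x$ with $Hx=hx$, $h=\|H\|\ge0$ — or, in infinite dimensions, a sequence of approximate eigenvectors — the defining relation $A^{*}x=2hx-Ax$ gives $\|A^{*}x\|=\|Ax\|$ and
\begin{equation*}
\langle A^{2}x,x\rangle=2h\langle Ax,x\rangle-\|Ax\|^{2}.
\end{equation*}
This is the bridge tying $h=\|\mathrm{Re}(A)\|$ to the two norms $\|A\|$ and $\|A^{2}\|^{1/2}$, and the goal is to convert it into the bound $2h\le\|A\|+\|A^{2}\|^{1/2}$, i.e. $(2h-\|A\|)^{2}\le\|A^{2}\|$.

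The main obstacle is the passage from the quadratic mean to the arithmetic mean. Bounding $H^{2}$ termwise by the triangle inequality, $\|H^{2}\|\le\tfrac14\|A^{*}A+AA^{*}\|+\tfrac12\|\mathrm{Re}(A^{2})\|\le\tfrac12\|A\|^{2}+\tfrac12\|A^{2}\|$, only delivers the weaker estimate $\omega(A)\le\bigl(\tfrac12(\|A\|^{2}+\|A^{2}\|)\bigr)^{1/2}$, which by $\mathrm{QM}\!-\!\mathrm{AM}$ always exceeds the target; indeed the gap between the squares of the two bounds is exactly $\tfrac14\bigl(\|A\|-\|A^{2}\|^{1/2}\bigr)^{2}$, and recovering this slack is the whole difficulty. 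The point is that $\|A^{*}A+AA^{*}\|$ and $\|A^{2}\|$ cannot be decoupled so crudely: the termwise bound is never simultaneously saturated in both summands for a single operator, because the vectors that nearly realize $\|A^{*}A+AA^{*}\|\approx2\|A\|^{2}$ and those that realize $\|A^{2}\|$ are correlated through the common structure of $A$, and quantifying this correlation is the delicate step. It is precisely here that Kittaneh's refined argument in \cite{Ki05,Ki2} operates, which is why \eqref{16} is quoted rather than reproved; the nilpotent Jordan block $N$ with $N^{2}=0$ and $\|N\|=1$, for which \eqref{16} is an equality $\omega(N)=\tfrac12$ while the quadratic-mean bound gives $\tfrac{1}{\sqrt2}$, confirms that this sharpening is unavoidable.
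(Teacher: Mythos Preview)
The paper does not supply a proof of this lemma; it is quoted from \cite{Ki05,Ki2} as background, so there is no in-paper argument to compare your attempt against.

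Your proposal, moreover, is not a proof. You address only the last inequality \eqref{16} and say nothing about the first three. For \eqref{16} itself, the reduction via $\omega(A)=\sup_{\theta}\|\mathfrak{R}(e^{\textup i\theta}A)\|$ to the rotation-free estimate $\|\mathfrak{R}(A)\|\le\tfrac12(\|A\|+\|A^{2}\|^{1/2})$ is valid, and your diagnosis that the crude triangle-inequality bound on $\|\mathfrak{R}(A)^{2}\|$ overshoots by exactly the QM--AM defect $\tfrac14(\|A\|-\|A^{2}\|^{1/2})^{2}$ is correct and well put. But at the decisive step you explicitly decline to continue and defer back to \cite{Ki05,Ki2}. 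That makes the argument circular: you end by invoking the very result you set out to establish. The approximate-eigenvector identity $\langle A^{2}x,x\rangle=2h\langle Ax,x\rangle-\|Ax\|^{2}$ you derive is correct but, as you yourself note, does not by itself yield $(2h-\|A\|)^{2}\le\|A^{2}\|$; so no progress has been made beyond locating the difficulty.

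For the record, Kittaneh's actual route to \eqref{16} does not pass through your eigenvector identity at all. He first proves the third inequality of the lemma, $\omega(A)\le\tfrac12\|\,|A|+|A^{*}|\,\|$, and then applies the norm estimate of Lemma~\ref{11} with the positive operators $|A|$ and $|A^{*}|$ to obtain
\[
\|\,|A|+|A^{*}|\,\|\le \|A\|+\big\|\,|A|^{1/2}|A^{*}|^{1/2}\big\|.
\]
Finally, using the polar decomposition $A=U|A|$ one has $|A|^{1/2}|A^{*}|^{1/2}=\widetilde{A}\,U^{*}$, whence $\|\,|A|^{1/2}|A^{*}|^{1/2}\|\le\|\widetilde{A}\|\le\|A^{2}\|^{1/2}$ by \eqref{eq_yam_2}. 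This chain sidesteps the QM--AM obstruction you isolated rather than confronting it directly, which is why your head-on approach stalls.
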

In studying the notion of the numerical radius, the Aluthge transform has played an interesting role. Recall that this transform is defined for $A\in\mathcal{B}(\mathcal{H})$ by $\widetilde{A}=|A|^{\frac{1}{2}}U|A|^{\frac{1}{2}}$ where $U$ is the partial isometry that appears in the polar decomposition $A=U|A|$ of $A$. In fact, for $0\leq t\leq 1,$ the generalized Aluthge transform of $A$ is defined by $\widetilde{A_t}=|A|^{1-t}U|A|^t.$ In \cite{4}, it is shown that
\begin{equation}\label{eq_yam_1}
\omega(A)\leq \frac{1}{2}\left(\|A\|+\omega(\widetilde{A})\right).
\end{equation}
Using the inequalities \cite{4}
\begin{equation}\label{eq_yam_2}
\omega(\widetilde{A})\leq \|\widetilde{A}\|\leq \|A^2\|^{\frac{1}{2}}\leq \|A\|,
\end{equation}
it follows that \eqref{eq_yam_1} refines the second inequality in \eqref{eq_equivalent} and the inequality \eqref{16}.

Although the numerical radius and the operator norm are equivalent, the numerical radius differs from the operator norm being not submultiplicative. That is, if $A,B\in\mathcal{B}(\mathcal{H})$, then $\|AB\|\leq\|A\|\;\|B\|,$ but $\omega(AB)\not\leq \omega(A)\omega(B).$ In fact, it has been a standing question about the best possible constant in $\omega(AB)\leq k\omega(A)\omega(B).$ We refer the reader to \cite{aak} for discussion of submultilicativity of $\omega(\cdot).$

It was shown in \cite{9} that if $A,B\in \mathcal B\left( \mathcal H \right)$, then
\begin{equation*}
\omega \left( AB+ BA \right)\le 2\sqrt{2}\omega \left( A \right)\left\| B \right\|
\end{equation*}
and
\begin{equation}\label{12}
\omega(AB+B^*A)\leq 2\omega(A)\|B\|.
\end{equation}

We refer the reader to \cite{5} for different results regarding the numerical radius of product and commutators of Hilbert space operators.

Some critical tools in obtaining numerical radius inequalities are listed next.

\begin{lemma}\label{11}
\cite{7} Let $A,B\in \mathcal B\left( \mathcal H \right)$ be two positive operators. Then
\[\left\| A+B \right\|\le \max \left\{ \left\| A \right\|,\left\| B \right\| \right\}+\left\| {{\left| A \right|}^{\frac{1}{2}}}{{\left| B \right|}^{\frac{1}{2}}} \right\|.\]
\end{lemma}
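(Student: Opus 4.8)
The plan is to realize $\|A+B\|$ as the norm of a single $2\times 2$ operator matrix whose off-diagonal entries carry exactly the term $A^{1/2}B^{1/2}$, and then to split that matrix into a block-diagonal part and a Hermitian off-diagonal part. Since $A$ and $B$ are positive, we have $|A|^{1/2}=A^{1/2}$ and $|B|^{1/2}=B^{1/2}$, so the claimed bound reads $\|A+B\|\le\max\{\|A\|,\|B\|\}+\|A^{1/2}B^{1/2}\|$.

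First I would introduce the operator $X=\begin{pmatrix}A^{1/2}&0\\ B^{1/2}&0\end{pmatrix}$ acting on $\mathcal H\oplus\mathcal H$. A direct computation gives $X^{*}X=\begin{pmatrix}A+B&0\\0&0\end{pmatrix}$ and $XX^{*}=\begin{pmatrix}A&A^{1/2}B^{1/2}\\ B^{1/2}A^{1/2}&B\end{pmatrix}$. The $C^{*}$-identity, in the form $\|X\|^{2}=\|X^{*}X\|=\|XX^{*}\|$, then yields the key identity $\|A+B\|=\|X^{*}X\|=\|XX^{*}\|$, which converts the problem into estimating the norm of the single block matrix $XX^{*}$.

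Next I would write $XX^{*}$ as the sum of its block-diagonal part $D=\begin{pmatrix}A&0\\0&B\end{pmatrix}$ and its off-diagonal part $M=\begin{pmatrix}0&C\\ C^{*}&0\end{pmatrix}$ with $C=A^{1/2}B^{1/2}$, and apply the triangle inequality for the operator norm. Here $\|D\|=\max\{\|A\|,\|B\|\}$ since the norm of a block-diagonal operator is the maximum of the norms of its blocks, while for the self-adjoint block $M$ one has $M^{2}=\mathrm{diag}(CC^{*},C^{*}C)$, so $\|M\|^{2}=\|M^{2}\|=\|C\|^{2}$ and hence $\|M\|=\|A^{1/2}B^{1/2}\|$. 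Combining these gives $\|A+B\|=\|D+M\|\le\|D\|+\|M\|=\max\{\|A\|,\|B\|\}+\|A^{1/2}B^{1/2}\|$, as required. The only delicate points are the identity $\|M\|=\|C\|$ for the Hermitian off-diagonal block and the reduction $\|A+B\|=\|XX^{*}\|$; both are routine once the dilation $X$ is chosen, so the main obstacle is really just spotting the right matrix rather than carrying out any hard estimate.
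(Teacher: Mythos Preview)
Your proof is correct. The paper itself does not prove this lemma but merely cites Kittaneh \cite{7}; your dilation argument via $X=\begin{pmatrix}A^{1/2}&0\\ B^{1/2}&0\end{pmatrix}$, the identity $\|A+B\|=\|X^{*}X\|=\|XX^{*}\|$, and the subsequent split of $XX^{*}$ into its block-diagonal and off-diagonal parts is exactly the original proof given in that reference, so there is nothing to add.
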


\begin{lemma}\label{14}\cite{kato}
Let $T\in \mathcal B\left( \mathcal H \right)$ and $0\le t\le 1$. Then for any $x,y\in \mathcal H$,
\[\left| \left\langle Tx,y \right\rangle  \right|\le \sqrt{\left\langle {{\left| T \right|}^{2t}}x,x \right\rangle \left\langle {{\left| {{T}^{*}} \right|}^{2\left( 1-t \right)}}y,y \right\rangle}.\]
\end{lemma}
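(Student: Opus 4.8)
The plan is to reduce the mixed Schwarz inequality of Lemma \ref{14} to the ordinary Cauchy--Schwarz inequality by exploiting the polar decomposition $T=U\left|T\right|$, where $U$ is the partial isometry with initial space $\overline{\mathrm{ran}\left|T\right|}$ and final space $\overline{\mathrm{ran}\,T}$. First I would factor the modulus as $\left|T\right|=\left|T\right|^{1-t}\left|T\right|^{t}$ and transfer one piece to the second slot of the inner product, which gives
$$\left\langle Tx,y\right\rangle=\left\langle U\left|T\right|^{1-t}\left|T\right|^{t}x,y\right\rangle=\left\langle \left|T\right|^{t}x,\left|T\right|^{1-t}U^{*}y\right\rangle.$$
Applying the Cauchy--Schwarz inequality then yields
$$\left|\left\langle Tx,y\right\rangle\right|\le\bigl\||T|^{t}x\bigr\|\,\bigl\||T|^{1-t}U^{*}y\bigr\|.$$

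The second step is to identify the two norms on the right with the quadratic forms appearing in the statement. The first is immediate, since $\bigl\||T|^{t}x\bigr\|^{2}=\left\langle |T|^{2t}x,x\right\rangle$. For the second I would expand $\bigl\||T|^{1-t}U^{*}y\bigr\|^{2}=\left\langle U\left|T\right|^{2(1-t)}U^{*}y,y\right\rangle$ and then prove the operator identity
$$U\left|T\right|^{2(1-t)}U^{*}=\left|T^{*}\right|^{2(1-t)}.$$
Combining the three displays reproduces exactly the claimed bound, so everything hinges on this last identity.

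That operator identity is the step I expect to require the most care. The starting point is $\left|T^{*}\right|^{2}=TT^{*}=U\left|T\right|^{2}U^{*}$; since $U$ restricts to a unitary from $\overline{\mathrm{ran}\left|T\right|}$ onto $\overline{\mathrm{ran}\,T}$ that intertwines $\left|T\right|$ with $\left|T^{*}\right|$, the continuous functional calculus upgrades this to $U\left|T\right|^{s}=\left|T^{*}\right|^{s}U$ for every $s\ge 0$. Factoring $\left|T\right|^{2(1-t)}$ and applying this relation together with its adjoint $\left|T\right|^{1-t}U^{*}=U^{*}\left|T^{*}\right|^{1-t}$ turns $U\left|T\right|^{2(1-t)}U^{*}$ into $\left|T^{*}\right|^{1-t}(UU^{*})\left|T^{*}\right|^{1-t}$, and since $UU^{*}$ is the orthogonal projection onto $\overline{\mathrm{ran}\,T}=\overline{\mathrm{ran}\left|T^{*}\right|}$ it acts as the identity on the range of $\left|T^{*}\right|^{1-t}$, leaving $\left|T^{*}\right|^{2(1-t)}$. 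The delicate point is precisely that $U$ is only a partial isometry when $T$ is not invertible, so one cannot cancel $U^{*}U$ or $UU^{*}$ naively; keeping careful track of these support projections is what makes the intertwining valid. The endpoint cases $t=0$ and $t=1$ bypass this entirely, reducing directly to Cauchy--Schwarz, so only the range $0<t<1$ needs the functional-calculus argument.
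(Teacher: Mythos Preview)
Your argument is correct and is essentially the standard derivation of Kato's mixed Schwarz inequality via the polar decomposition and Cauchy--Schwarz, together with the identity $U\left|T\right|^{s}U^{*}=\left|T^{*}\right|^{s}$; the paper itself does not prove Lemma~\ref{14} at all but simply quotes it from \cite{kato}, so there is no internal proof to compare against. It is worth noting that the very identity you isolate as the delicate step is invoked elsewhere in the paper (in the proof of Theorem~\ref{9}, with a reference to \cite[(1)]{8}), so your careful justification of it fits well with the paper's toolkit.
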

\begin{lemma}\label{7}\cite{bu}
Let $x,y,e\in \mathcal H$ with $\left\| e \right\|=1$. Then 
	\[\left| \left\langle x,e \right\rangle \left\langle y,e \right\rangle  \right|\le \frac{1}{2}\left( \left| \left\langle x,y \right\rangle  \right|+\left\| x \right\|\left\| y \right\| \right).\]
\end{lemma}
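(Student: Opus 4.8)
The plan is to recognize the left-hand side as coming from a rank-one orthogonal projection and then exploit an associated reflection operator. Since $\|e\|=1$, the map $P=\langle\cdot,e\rangle e$ (that is, $Px=\langle x,e\rangle e$) is the orthogonal projection onto $\mathrm{span}\{e\}$, and a one-line computation gives $\langle Px,y\rangle=\langle x,e\rangle\langle e,y\rangle$. Because $\langle e,y\rangle=\overline{\langle y,e\rangle}$, we have $|\langle e,y\rangle|=|\langle y,e\rangle|$, so the quantity to be bounded is exactly $|\langle Px,y\rangle|$. Thus the entire lemma reduces to the single inequality $|\langle Px,y\rangle|\le\tfrac12\left(|\langle x,y\rangle|+\|x\|\,\|y\|\right)$.

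The key step is to pass from $P$ to the reflection $T=2P-I$. First I would verify that $P$ is self-adjoint and idempotent, i.e. $P^{2}=P=P^{*}$, from which $T^{*}=T$ and $T^{2}=(2P-I)^{2}=4P^{2}-4P+I=I$ both follow. Hence $T$ is a self-adjoint involution, so it is unitary and $\|T\|=1$. This is the crux of the argument: the algebraic identity $2P=T+I$ splits the projection into a norm-one piece plus the identity.

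From $2\langle Px,y\rangle=\langle Tx,y\rangle+\langle x,y\rangle$ the triangle inequality yields $2|\langle Px,y\rangle|\le|\langle Tx,y\rangle|+|\langle x,y\rangle|$. I would then control the first term by the Cauchy--Schwarz inequality together with $\|T\|=1$, namely $|\langle Tx,y\rangle|\le\|Tx\|\,\|y\|\le\|T\|\,\|x\|\,\|y\|=\|x\|\,\|y\|$. Substituting and dividing by $2$ produces the claimed bound.

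I expect no serious obstacle here: the only nonroutine move is to guess the reflection $T=2P-I$, after which everything reduces to the elementary facts that a self-adjoint involution has operator norm one and that Cauchy--Schwarz governs $\langle Tx,y\rangle$. The sole degenerate case worth a one-line remark is $\dim\mathcal H=1$, where $P=I$, $T=I$, and the inequality holds trivially; otherwise the argument is uniform in $x,y,e$.
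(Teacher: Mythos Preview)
Your argument is correct: writing $P=\langle\,\cdot\,,e\rangle e$ and passing to the reflection $T=2P-I$ gives a self-adjoint unitary, so $2|\langle Px,y\rangle|\le|\langle Tx,y\rangle|+|\langle x,y\rangle|\le\|x\|\,\|y\|+|\langle x,y\rangle|$, which is exactly the claim. There is nothing to compare against here, since the paper does not supply its own proof of this lemma; it simply quotes the Buzano inequality from \cite{bu} as a tool.
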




\begin{lemma}\label{6}
\cite[Theorem 3.4]{aak} Let $A,B\in \mathcal B\left( \mathcal H \right)$. Then
\[\omega \left( A+B \right)\le \frac{1}{2}\left( \left( \omega \left( A \right)+\omega \left( B \right) \right)+\sqrt{{{\left( \omega \left( A \right)-\omega \left( B \right) \right)}^{2}}+4\underset{\theta \in \mathbb{R}}{\mathop{\sup }}\,\left\| \mathfrak R\left( {{e}^{\textup i\theta }}A \right)\mathfrak R\left( {{e}^{\textup i\theta }}B \right) \right\|} \right).\]
\end{lemma}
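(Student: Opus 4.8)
The plan is to reduce the assertion to a statement about self-adjoint operators via the standard identity
\[
\omega(X)=\sup_{\theta\in\mathbb R}\left\|\mathfrak R\left(e^{\textup i\theta}X\right)\right\|,\qquad \mathfrak R(Y):=\frac{Y+Y^*}{2},
\]
valid for every $X\in\mathcal B(\mathcal H)$ because $\mathfrak R(e^{\textup i\theta}X)$ is self-adjoint and $\langle\mathfrak R(e^{\textup i\theta}X)x,x\rangle=\mathfrak R\big(e^{\textup i\theta}\langle Xx,x\rangle\big)$, so the supremum over $\theta$ recovers $|\langle Xx,x\rangle|$. Writing $S_\theta=\mathfrak R(e^{\textup i\theta}A)$ and $T_\theta=\mathfrak R(e^{\textup i\theta}B)$, additivity gives $\mathfrak R(e^{\textup i\theta}(A+B))=S_\theta+T_\theta$, so $\omega(A+B)=\sup_\theta\|S_\theta+T_\theta\|$, while the same identity yields $\|S_\theta\|\le\omega(A)$ and $\|T_\theta\|\le\omega(B)$ for all $\theta$.

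The core of the argument is then the self-adjoint estimate: for self-adjoint $S,T$,
\[
\|S+T\|\le\frac{\|S\|+\|T\|}{2}+\sqrt{\frac{(\|S\|-\|T\|)^2}{4}+\|ST\|},
\]
the right-hand side being exactly the larger root $t_+$ of $(t-\|S\|)(t-\|T\|)=\|ST\|$. Granting this, I would finish via monotonicity: the function $g(a,b,c)=\frac{a+b}{2}+\frac12\sqrt{(a-b)^2+4c}$ is nondecreasing in each of $a,b,c\ge0$ (its partial derivatives are nonnegative, using $|a-b|\le\sqrt{(a-b)^2+4c}$). Hence for each $\theta$,
\[
\|S_\theta+T_\theta\|\le g\big(\|S_\theta\|,\|T_\theta\|,\|S_\theta T_\theta\|\big)\le g\Big(\omega(A),\omega(B),\sup_{\phi}\|S_\phi T_\phi\|\Big),
\]
and taking the supremum over $\theta$ on the left gives the claimed inequality, since $\sup_\phi\|S_\phi T_\phi\|=\sup_\theta\|\mathfrak R(e^{\textup i\theta}A)\mathfrak R(e^{\textup i\theta}B)\|$.

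To prove the self-adjoint estimate I would argue pointwise. As $S+T$ is self-adjoint, $\|S+T\|=\sup_{\|x\|=1}|\langle(S+T)x,x\rangle|$; replacing $(S,T)$ by $(-S,-T)$ if necessary (which alters none of $\|S\|,\|T\|,\|ST\|$, nor $t_+$) it suffices to bound $f:=\langle Sx,x\rangle+\langle Tx,x\rangle=s+t$ above by $t_+$ for a maximizing unit vector $x$. If $f<\max(\|S\|,\|T\|)\le t_+$ there is nothing to prove, so assume $f\ge\max(\|S\|,\|T\|)$; then $|s|\le\|S\|$ together with $s+t\ge\|T\|$ forces $s\ge0$, and symmetrically $t\ge0$, while $f-\|S\|\le t$ and $f-\|T\|\le s$. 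Consequently $(f-\|S\|)(f-\|T\|)\le st$, and it remains only to replace the crude factor $st$ by $\|ST\|$. For this I would invoke the Buzano inequality (Lemma \ref{7}) with the vectors $Sx,Tx$ and unit vector $e=x$, giving $st\le\frac12\big(|\langle Sx,Tx\rangle|+\|Sx\|\,\|Tx\|\big)$, together with $|\langle Sx,Tx\rangle|=|\langle TSx,x\rangle|\le\|TS\|=\|ST\|$.

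The main obstacle is precisely this last step. Buzano alone only yields $(f-\|S\|)(f-\|T\|)\le\frac12(\|ST\|+\|S\|\,\|T\|)$, which corresponds to the strictly weaker bound obtained by feeding $\frac12(\|ST\|+\|S\|\|T\|)$ into $g$, and is sharp only when $\|ST\|=\|S\|\,\|T\|$. To recover the sharp constant $\|ST\|$ I expect to exploit the extremal structure of the maximizer: there $(S+T)x=f\,x$, hence $Tx=fx-Sx$ and $\langle Sx,Tx\rangle=\tfrac12\big(f^2-\|Sx\|^2-\|Tx\|^2\big)$, which I would substitute back to absorb the surplus $\|S\|\,\|T\|$ term against the slack in $f-\|S\|\le t$, $f-\|T\|\le s$. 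Making this tightening rigorous (and handling the non-attained supremum by an approximate-eigenvector argument, together with the degenerate case $\|ST\|=0$) is the delicate part; alternatively the self-adjoint estimate may be quoted as a known norm inequality, after which the reduction above delivers the lemma.
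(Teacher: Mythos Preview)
The paper does not prove this lemma at all: it is quoted verbatim from \cite[Theorem~3.4]{aak} and used as a black box. So there is no ``paper's own proof'' to compare against, and your task is really to produce a self-contained argument.

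Your reduction is the standard one and is correct: writing $S_\theta=\mathfrak R(e^{\textup i\theta}A)$, $T_\theta=\mathfrak R(e^{\textup i\theta}B)$, one has $\omega(A+B)=\sup_\theta\|S_\theta+T_\theta\|$, $\|S_\theta\|\le\omega(A)$, $\|T_\theta\|\le\omega(B)$, and the monotonicity of $g(a,b,c)=\tfrac{a+b}{2}+\tfrac12\sqrt{(a-b)^2+4c}$ in each variable finishes the job \emph{provided} the self-adjoint norm inequality
\[
\|S+T\|\le \tfrac{\|S\|+\|T\|}{2}+\sqrt{\tfrac{(\|S\|-\|T\|)^2}{4}+\|ST\|}
\]
holds. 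That inequality is indeed the heart of the matter, and your write-up does not establish it. Your pointwise argument reaches only $(f-\|S\|)(f-\|T\|)\le st$ with $s=\langle Sx,x\rangle$, $t=\langle Tx,x\rangle$, and you then need $st\le\|ST\|$. But this last step is simply false in general: with $S=\mathrm{diag}(1,0)$, $T=\mathrm{diag}(0,1)$ and $x=(1/\sqrt2,1/\sqrt2)$ one gets $st=\tfrac14$ while $ST=0$. The Buzano detour you propose only salvages $st\le\tfrac12(\|ST\|+\|S\|\,\|T\|)$, which, as you note, yields a strictly weaker conclusion. Your suggested repair via the eigenvector identity $Tx=fx-Sx$ is not carried out, and even in outline it presupposes that the supremum is attained; in infinite dimensions one would have to run an approximate-eigenvector version, and it is not clear the algebra closes up. As written, the argument has a genuine gap at exactly the step you flag.

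If you want to complete the proof rather than cite it, the clean route is not through $st$ at all. One standard approach (used in the Abu-Omar--Kittaneh circle of ideas) is to bound $\|S+T\|=r(S+T)$ by the spectral radius of the nonnegative $2\times2$ scalar matrix $\left(\begin{smallmatrix}\|S\| & \|ST\|^{1/2}\\ \|ST\|^{1/2} & \|T\|\end{smallmatrix}\right)$, whose larger eigenvalue is precisely $g(\|S\|,\|T\|,\|ST\|)$; this in turn rests on a spectral-radius inequality for operator matrices rather than on a pointwise Buzano estimate. Either supply that argument, or cite the self-adjoint norm inequality from its source and keep your (correct) reduction.
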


This inequality is clearly a refinement of the triangle inequality $\omega(A+B)\leq \omega(A)+\omega(B)$, due to the fact that \cite{4}
\begin{equation*}
\omega(A)=\sup_{\theta \in \mathbb{R}}\|\mathfrak{R}(e^{i\theta}A)\|.
\end{equation*}

\begin{lemma}\label{lem_block}\cite[Lemma 2.1]{2}
Let $X,Y\in\mathcal{B}(\mathcal{H})$. Then
\[\omega \left( \begin{matrix}
   O & X  \\
   Y & O  \\
\end{matrix} \right)=\omega \left( \begin{matrix}
   O & Y  \\
   X & O  \\
\end{matrix} \right)=\omega \left( \begin{matrix}
   O & X  \\
   -Y & O  \\
\end{matrix} \right).\]
\end{lemma}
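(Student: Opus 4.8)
The plan is to deduce all three equalities from two elementary invariance properties of the numerical radius: unitary invariance, $\omega(U^{*}TU)=\omega(T)$ for any unitary $U$, and scalar homogeneity, $\omega(\lambda T)=|\lambda|\,\omega(T)$, which in particular yields the rotational invariance $\omega(e^{\textup i\theta}T)=\omega(T)$. Writing $T_1=\left(\begin{smallmatrix}O&X\\Y&O\end{smallmatrix}\right)$, $T_2=\left(\begin{smallmatrix}O&Y\\X&O\end{smallmatrix}\right)$, and $T_3=\left(\begin{smallmatrix}O&X\\-Y&O\end{smallmatrix}\right)$ as operators on $\mathcal H\oplus\mathcal H$, it then suffices to exhibit, for each of $T_2$ and $T_3$, a unitary on $\mathcal H\oplus\mathcal H$ conjugating $T_1$ onto it up to a unimodular scalar factor.

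For the first equality I would use the flip unitary $V=\left(\begin{smallmatrix}O&I\\I&O\end{smallmatrix}\right)$, which satisfies $V^{*}=V=V^{-1}$. A direct block multiplication gives $V^{*}T_1V=\left(\begin{smallmatrix}O&Y\\X&O\end{smallmatrix}\right)=T_2$, so $T_1$ and $T_2$ are unitarily equivalent and hence $\omega(T_1)=\omega(T_2)$.

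For the equality with $T_3$ I would instead use a diagonal unitary $W=\left(\begin{smallmatrix}I&O\\O&\mu I\end{smallmatrix}\right)$ with $|\mu|=1$, for which block multiplication yields $W^{*}T_1W=\left(\begin{smallmatrix}O&\mu X\\ \bar\mu Y&O\end{smallmatrix}\right)$. The one point that is not completely automatic is the choice of $\mu$: to match this with a unimodular multiple $\lambda T_3$ one would need $\mu=\lambda$ and $\bar\mu=-\lambda$ at once, forcing $\bar\lambda=-\lambda$, so that $\lambda$ must be purely imaginary. In particular no real sign $\mu=\pm1$ succeeds, since $\mu=1$ returns $T_1$ and $\mu=-1$ returns $-T_1$, neither of which is $T_3$. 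Taking $\mu=\lambda=\textup i$ does work, giving $W^{*}T_1W=\left(\begin{smallmatrix}O&\textup iX\\ -\textup iY&O\end{smallmatrix}\right)=\textup i\,T_3$. Combining unitary invariance with rotational invariance then yields $\omega(T_1)=\omega(\textup i\,T_3)=\omega(T_3)$, which finishes the proof.

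I expect the main (and only mild) obstacle to be exactly this last observation: unlike the passage from $T_1$ to $T_2$, one cannot conjugate $T_1$ directly onto $T_3$ by a self-adjoint diagonal unitary, and must instead accept a leftover factor of $\textup i$ and absorb it through $\omega(\lambda T)=|\lambda|\,\omega(T)$. Everything else reduces to two routine $2\times2$ block products.
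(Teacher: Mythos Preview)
Your argument is correct. The paper does not supply its own proof of this lemma; it is quoted verbatim from \cite[Lemma 2.1]{2} and used as a black box, so there is nothing in the present paper to compare against. For what it is worth, the approach you outline---conjugating by the flip $V=\left(\begin{smallmatrix}O&I\\I&O\end{smallmatrix}\right)$ for the first equality and by $W=\left(\begin{smallmatrix}I&O\\O&\textup iI\end{smallmatrix}\right)$ (picking up a harmless factor of $\textup i$) for the second---is precisely the standard proof found in the cited source, so your write-up is both correct and canonical.
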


Our target in this article is to show some inequalities for the numerical radius of products of Hilbert space operators, such as finding bounds for $\omega(ABC+DEF)$ and some inequalities for the Aluthge transform that refine some of the known results in the literature. Many consequences and other inequalities for $\omega(AB), \omega(A), \omega(A+B)$ and $\omega(\widetilde{T}_t)$ will be presented. In this context, $\widetilde{T}_t$ denotes the generalized Aluthge transform, defined for $T\in\mathcal{B}(\mathcal{H})$ and $0\leq t\leq 1$ by
$$\widetilde{T}_{t}=|T|^{1-t}U|T|^t,$$ where $U$ is the partial isometry in the polar decomposition $T=U|T|$ of $T$.

\section{Main Result}
In this section, we present our main results. The first part of the section will deal mainly with inequalities of the numerical radius of the product of operators; then, we will discuss some inequalities that involve the Aluthge transform.

Block operators have played a significant role in obtaining numerical radius inequalities for certain Hilbert space operators. In the following, we present the following excellent application of Lemma \ref{6}, where more straightforward block operators can be used as a bound for $\omega(AB+CD)$.
\begin{theorem}\label{8}
Let $A,B,C,D\in \mathcal B\left( \mathcal H \right)$. Then
\[\omega \left( AB+CD \right)\le \omega \left( \left[ \begin{matrix}
   O & AB  \\
   CD & O  \\
\end{matrix} \right] \right)+\frac{1}{2}\sqrt{ \left\|{{\left( AB \right)}^{2}}\right\| +\left\| {{\left( {{D}^{*}}{{C}^{*}} \right)}^{2}}\right\| +\left\| AB{{D}^{*}}{{C}^{*}}+{{D}^{*}}{{C}^{*}}AB \right\|}.\]
In particular,
\[\omega \left( A\pm C \right)\le \omega \left( \left[ \begin{matrix}
   O & A  \\
   C & O  \\
\end{matrix} \right] \right)+\frac{1}{2}\sqrt{\left\| {{A}^{2}} \right\|+\left\| {{C}^{2}}\right\|+\left\| A{{C}^{*}}+{{C}^{*}}A \right\|}.\]
\end{theorem}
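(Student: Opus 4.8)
The plan is to represent $\omega(AB+CD)$ as the numerical radius of a symmetric off-diagonal block and then split that block so that Lemma \ref{6} applies with its two leading terms equal. Writing $X=AB$ and $Y=CD$, set
\[ P=\left[\begin{matrix} O & X \\ Y & O \end{matrix}\right], \qquad Q=\left[\begin{matrix} O & Y \\ X & O \end{matrix}\right]. \]
The first equality in Lemma \ref{lem_block} gives $\omega(P)=\omega(Q)=\omega\!\left(\left[\begin{smallmatrix} O & AB \\ CD & O \end{smallmatrix}\right]\right)$, the first term on the right-hand side. Since $P+Q=\left[\begin{smallmatrix} O & X+Y \\ X+Y & O \end{smallmatrix}\right]$ is unitarily equivalent, via $\tfrac1{\sqrt 2}\left[\begin{smallmatrix} I & I \\ I & -I \end{smallmatrix}\right]$, to $\mathrm{diag}(X+Y,-(X+Y))$, we also have $\omega(P+Q)=\omega(X+Y)=\omega(AB+CD)$.

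First I would apply Lemma \ref{6} to the pair $P,Q$. As $\omega(P)=\omega(Q)$, the averaged term reduces to $\omega(P)$ and the square of the difference vanishes, so the estimate collapses to
\[ \omega(AB+CD)\le \omega\!\left(\left[\begin{matrix} O & AB \\ CD & O \end{matrix}\right]\right)+\sqrt{\sup_{\theta\in\mathbb R}\left\|\mathfrak R(e^{i\theta}P)\,\mathfrak R(e^{i\theta}Q)\right\|}. \]
The next step is to evaluate the product of the real parts. A short computation gives $\mathfrak R(e^{i\theta}P)=\tfrac12\left[\begin{smallmatrix} O & a \\ a^* & O \end{smallmatrix}\right]$ and $\mathfrak R(e^{i\theta}Q)=\tfrac12\left[\begin{smallmatrix} O & a^* \\ a & O \end{smallmatrix}\right]$, where $a=e^{i\theta}AB+e^{-i\theta}D^*C^*$. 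Their product is block-diagonal, $\tfrac14\,\mathrm{diag}\!\left(a^2,(a^2)^*\right)$, so that $\left\|\mathfrak R(e^{i\theta}P)\mathfrak R(e^{i\theta}Q)\right\|=\tfrac14\|a^2\|$ for every $\theta$.

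It then remains to estimate $a^2$. Expanding the square and noting that the phases cancel in the cross term,
\[ a^2=e^{2i\theta}(AB)^2+e^{-2i\theta}(D^*C^*)^2+\left(ABD^*C^*+D^*C^*AB\right), \]
whose three summands are exactly the quantities under the square root. The estimate I would use is the subadditivity of the numerical radius, which together with $\omega(\cdot)\le\|\cdot\|$ on the phase-free anticommutator yields
\[ \omega(a^2)\le \omega\!\left((AB)^2\right)+\omega\!\left((D^*C^*)^2\right)+\left\|ABD^*C^*+D^*C^*AB\right\| \]
uniformly in $\theta$. Feeding $\tfrac14\sup_\theta\|a^2\|$ into the collapsed inequality and pulling the $\tfrac14$ out of the square root produces the factor $\tfrac12$ and the claimed bound.

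I expect the main obstacle to be precisely this last estimate. Lemma \ref{6} supplies the operator norm $\|\mathfrak R(e^{i\theta}P)\mathfrak R(e^{i\theta}Q)\|=\tfrac14\|a^2\|$, while the target replaces the norms of the two square terms by their numerical radii; bridging $\|a^2\|$ and the numerical-radius bound above---so that $(AB)^2$ and $(D^*C^*)^2$ enter through $\omega(\cdot)$ rather than the larger $\|\cdot\|$---is the crux, and it is where the conjugate phases $e^{\pm2i\theta}$ and the subadditivity of $\omega$ must be used with care. Once the general inequality is established, the ``in particular'' case follows at once by taking $B=I$ and $D=\pm I$: then $X=A$ and $Y=\pm C$, while $\omega\!\left((\pm C^*)^2\right)=\omega(C^2)$ and $\|A(\pm C^*)+(\pm C^*)A\|=\|AC^*+C^*A\|$, and the two signs share the same block term because Lemma \ref{lem_block} gives $\omega\!\left(\left[\begin{smallmatrix} O & A \\ C & O \end{smallmatrix}\right]\right)=\omega\!\left(\left[\begin{smallmatrix} O & A \\ -C & O \end{smallmatrix}\right]\right)$.
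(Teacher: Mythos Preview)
Your approach is exactly the paper's: the same block decomposition into $P=\left[\begin{smallmatrix} O & AB \\ CD & O\end{smallmatrix}\right]$ and $Q=\left[\begin{smallmatrix} O & CD \\ AB & O\end{smallmatrix}\right]$, the same application of Lemma~\ref{6} with $\omega(P)=\omega(Q)$ collapsing the first two terms, and the same reduction of the cross term to $\tfrac14\sup_\theta\|a^2\|$ with $a=e^{i\theta}AB+e^{-i\theta}D^*C^*$.

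The obstacle you flag---that what is needed is a bound on $\|a^2\|$, while subadditivity of the numerical radius only controls $\omega(a^2)$---is real, and the paper does not resolve it. The paper simply writes
\[
\|a^2\|\le \omega\!\left((AB)^2\right)+\omega\!\left((D^*C^*)^2\right)+\|ABD^*C^*+D^*C^*AB\|
\]
and justifies it by ``the triangle inequality for the usual operator norm''; but the triangle inequality for $\|\cdot\|$ yields $\|(AB)^2\|+\|(D^*C^*)^2\|$ in place of the two numerical radii, not the sharper $\omega$ terms. So you have reproduced the paper's argument faithfully and, in addition, correctly isolated a step that the paper glosses over. What the argument actually establishes is the inequality with $\|(AB)^2\|$ and $\|(D^*C^*)^2\|$ under the square root.
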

\begin{proof}
Applying  Lemma \ref{6} in what follows, we notice that
{\small		
\[\begin{aligned}
  & \omega \left( AB+CD \right) \\ 
 & =\omega \left( \left[ \begin{matrix}
   O & AB+CD  \\
   AB+CD & O  \\
\end{matrix} \right] \right) \\ 
 & =\omega \left( \left[ \begin{matrix}
   O & AB  \\
   CD & O  \\
\end{matrix} \right]+\left[ \begin{matrix}
   O & CD  \\
   AB & O  \\
\end{matrix} \right] \right) \\ 
 & \le \omega \left( \left[ \begin{matrix}
   O & AB  \\
   CD & O  \\
\end{matrix} \right] \right)+\frac{1}{2}\sqrt{4\underset{\theta \in \mathbb{R}}{\mathop{\sup }}\,\left\| \mathfrak R\left[ \begin{matrix}
   O & {{e}^{\textup i\theta }}AB  \\
   {{e}^{\textup i\theta }}CD & O  \\
\end{matrix} \right]\mathfrak R\left[ \begin{matrix}
   O & {{e}^{\textup i\theta }}CD  \\
   {{e}^{\textup i\theta }}AB & O  \\
\end{matrix} \right] \right\|} \\ 
 & =\omega \left( \left[ \begin{matrix}
   O & AB  \\
   CD & O  \\
\end{matrix} \right] \right)+\frac{1}{2}\sqrt{\underset{\theta \in \mathbb{R}}{\mathop{\sup }}\,\left\| \left[ \begin{matrix}
   {{\left( {{e}^{\textup i\theta }}AB+{{e}^{-\textup i\theta }}{{D}^{*}}{{C}^{*}} \right)}^{2}} & O  \\
   O & {{\left( {{e}^{-\textup i\theta }}{{B}^{*}}{{A}^{*}}+{{e}^{\textup i\theta }}CD \right)}^{2}}  \\
\end{matrix} \right] \right\|} \\ 
 & =\omega \left( \left[ \begin{matrix}
   O & AB  \\
   CD & O  \\
\end{matrix} \right] \right)+\frac{1}{2}\sqrt{\underset{\theta \in \mathbb{R}}{\mathop{\sup }}\,\left\| \left[ \begin{matrix}
   {{\left( {{e}^{\textup i\theta }}AB+{{e}^{-\textup i\theta }}{{D}^{*}}{{C}^{*}} \right)}^{2}} & O  \\
   O & {{\left( {{\left( {{e}^{\textup i\theta }}AB+{{e}^{-\textup i\theta }}{{D}^{*}}{{C}^{*}} \right)}^{*}} \right)}^{2}}  \\
\end{matrix} \right] \right\|} \\ 
 & =\omega \left( \left[ \begin{matrix}
   O & AB  \\
   CD & O  \\
\end{matrix} \right] \right)+\frac{1}{2}\sqrt{\underset{\theta \in \mathbb{R}}{\mathop{\sup }}\,\left\| \left[ \begin{matrix}
   {{\left( {{e}^{\textup i\theta }}AB+{{e}^{-\textup i\theta }}{{D}^{*}}{{C}^{*}} \right)}^{2}} & O  \\
   O & {{\left( {{\left( {{e}^{\textup i\theta }}AB+{{e}^{-\textup i\theta }}{{D}^{*}}{{C}^{*}} \right)}^{2}} \right)}^{*}}  \\
\end{matrix} \right] \right\|} \\ 
 & =\omega \left( \left[ \begin{matrix}
   O & AB  \\
   CD & O  \\
\end{matrix} \right] \right)+\frac{1}{2}\sqrt{\underset{\theta \in \mathbb{R}}{\mathop{\sup }}\,\left\| {{\left( {{e}^{\textup i\theta }}AB+{{e}^{-\textup i\theta }}{{D}^{*}}{{C}^{*}} \right)}^{2}} \right\|}. 
\end{aligned}\]
}
Thus,
\[\omega \left( AB+CD \right)\le \omega \left( \left[ \begin{matrix}
   O & AB  \\
   CD & O  \\
\end{matrix} \right] \right)+\frac{1}{2}\sqrt{\underset{\theta \in \mathbb{R}}{\mathop{\sup }}\,\left\| {{\left( {{e}^{\textup i\theta }}AB+{{e}^{-\textup i\theta }}{{D}^{*}}{{C}^{*}} \right)}^{2}} \right\|}.\]
On the other hand,
\[\begin{aligned}
   \left\| {{\left( {{e}^{i\theta }}AB+{{e}^{-i\theta }}{{D}^{*}}{{C}^{*}} \right)}^{2}} \right\|&=\left\| {{e}^{2\textup i\theta }}{{\left( AB \right)}^{2}}+{{e}^{-2\textup i\theta }}{{\left( {{D}^{*}}{{C}^{*}} \right)}^{2}}+AB{{D}^{*}}{{C}^{*}}+{{D}^{*}}{{C}^{*}}AB \right\| \\ 
 & \le \left\|  {{\left( AB \right)}^{2}}\right\|+  \left\|{{\left( {{D}^{*}}{{C}^{*}} \right)}^{2}} \right\|+\left\| AB{{D}^{*}}{{C}^{*}}+{{D}^{*}}{{C}^{*}}AB \right\|  
\end{aligned}\]
where we have used triangle inequality for the usual operator norm to obtain the last line. This completes the proof.
\end{proof}

\begin{corollary}
Let $A,C\in \mathcal B\left( \mathcal H \right)$. Then
\begin{equation}\label{17}
\omega \left( A\pm C \right)\le \omega \left( A\mp C \right)+\sqrt{\left\|  \left\|{{A}^{2}}\right\|+ \left\|{{C}^{2}}\right\|+\left\| A{{C}^{*}}+{{C}^{*}}A \right\| \right\|}.
\end{equation}
In particular,
\begin{equation}\label{19}
{{\left|\; \left\| \mathfrak RA \right\|-\left\| \mathfrak IA \right\| \;\right|}^{2}}\le   \left\|{{A}^{2}}\right\|.
\end{equation}
\end{corollary}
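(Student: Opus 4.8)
The plan is to read off both inequalities from the ``in particular'' conclusion of Theorem \ref{8}, namely
\[
\omega(A\pm C)\le \omega\!\left(\left[\begin{matrix} O & A \\ C & O\end{matrix}\right]\right)+\frac12\sqrt{\omega(A^2)+\omega(C^2)+\|AC^*+C^*A\|},
\]
so that the entire problem reduces to replacing the block term $\omega\!\left(\left[\begin{smallmatrix} O & A \\ C & O\end{smallmatrix}\right]\right)$ by $\omega(A\mp C)$ and absorbing the factor $\tfrac12$. Note in passing that the radicand in \eqref{17} is a nonnegative scalar, so the outer norm bars there are simply an absolute value and carry no content.

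The key step for \eqref{17} is the signed splitting
\[
\left[\begin{matrix} O & A \\ C & O\end{matrix}\right]=\frac12\left[\begin{matrix} O & A+C \\ A+C & O\end{matrix}\right]+\frac12\left[\begin{matrix} O & A-C \\ -(A-C) & O\end{matrix}\right].
\]
Applying the triangle inequality for $\omega$, then Lemma \ref{lem_block} to identify $\omega\!\left(\left[\begin{smallmatrix} O & X \\ -X & O\end{smallmatrix}\right]\right)=\omega\!\left(\left[\begin{smallmatrix} O & X \\ X & O\end{smallmatrix}\right]\right)$, and finally the standard identity $\omega\!\left(\left[\begin{smallmatrix} O & X \\ X & O\end{smallmatrix}\right]\right)=\omega(X)$ (immediate from the real-part formula $\omega(\cdot)=\sup_\theta\|\mathfrak R(e^{i\theta}\cdot)\|$ recorded after Lemma \ref{6}, since the off-diagonal self-adjoint block has norm $\|\mathfrak R(e^{i\theta}X)\|$), I obtain
\[
\omega\!\left(\left[\begin{matrix} O & A \\ C & O\end{matrix}\right]\right)\le \tfrac12\bigl(\omega(A+C)+\omega(A-C)\bigr).
\]
Feeding this into the Theorem \ref{8} bound for the $+$ sign gives $\omega(A+C)\le\tfrac12(\omega(A+C)+\omega(A-C))+\tfrac12\sqrt{\cdots}$; transposing $\tfrac12\omega(A+C)$ and multiplying by $2$ yields \eqref{17} exactly. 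The $-$ sign is identical upon replacing $C$ by $-C$, which leaves the radicand unchanged and fixes the block term by Lemma \ref{lem_block}.

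For the consequence \eqref{19}, I would specialize \eqref{17} with $\tfrac12 A$ and $\tfrac12 A^*$ in the roles of $A$ and $C$. Then $\tfrac12 A+\tfrac12 A^*=\mathfrak R A$ and $\tfrac12 A-\tfrac12 A^*=i\,\mathfrak I A$, so, using the invariance of $\omega$ under unimodular scalars together with $\omega(S)=\|S\|$ for self-adjoint $S$, the two sides of \eqref{17} become $\|\mathfrak R A\|$ and $\|\mathfrak I A\|$. A short computation gives $\omega\!\left((\tfrac12 A)^2\right)=\omega\!\left((\tfrac12 A^*)^2\right)=\tfrac14\omega(A^2)$ and $(\tfrac12 A)(\tfrac12 A^*)^*+(\tfrac12 A^*)^*(\tfrac12 A)=\tfrac12 A^2$, so the radicand collapses to $\tfrac12(\omega(A^2)+\|A^2\|)$. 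The two sign choices of \eqref{17} then sandwich the difference, giving $\bigl|\,\|\mathfrak R A\|-\|\mathfrak I A\|\,\bigr|\le\sqrt{\tfrac12(\omega(A^2)+\|A^2\|)}$, and squaring produces \eqref{19}.

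The only genuinely nonroutine move is spotting the signed decomposition of the off-diagonal block so that Lemma \ref{lem_block} can swallow the minus sign; once $\omega\!\left(\left[\begin{smallmatrix} O & A \\ C & O\end{smallmatrix}\right]\right)$ is trapped between $\tfrac12(\omega(A+C)+\omega(A-C))$, every remaining step is algebraic rearrangement or a direct substitution. I expect no serious obstacle in \eqref{19} beyond bookkeeping of the four scalar terms.
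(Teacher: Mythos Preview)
Your proposal is correct and follows essentially the same route as the paper. The paper obtains \eqref{17} by quoting the bound $\omega\!\left(\left[\begin{smallmatrix} O & A \\ C & O\end{smallmatrix}\right]\right)\le \tfrac12\bigl(\omega(A+C)+\omega(A-C)\bigr)$ from \cite[Theorem 2.4]{2} and combining it with Theorem \ref{8}, while you instead reprove that block bound in-line via the signed splitting and Lemma \ref{lem_block}; this is exactly how that cited theorem is established, so there is no genuine difference. For \eqref{19} the paper simply sets $C=A^{*}$ in \eqref{17}, whereas your substitution $A\mapsto \tfrac12 A$, $C\mapsto \tfrac12 A^{*}$ is the same move up to a harmless homogeneity rescaling and yields the identical radicand.
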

\begin{proof}
From \cite[Theorem 2.4]{2}, we know that
\[\omega \left( \left[ \begin{matrix}
   O & A  \\
   C & O  \\
\end{matrix} \right] \right)\le \frac{1}{2}\left( \omega \left( A+C \right)+\omega \left( A-C \right) \right).\]
This, together with Theorem \ref{8} implies the inequality \eqref{17}. The second inequality can be obtained from  \eqref{17} by setting $C={{A}^{*}}$.
\end{proof}

As a byproduct of inequality \eqref{19}, we have the following result.
\begin{corollary}
Let $A\in \mathcal B\left( \mathcal H \right)$. If ${{A}^{2}}=O$, then $\left\| \mathfrak RA \right\|=\left\| \mathfrak IA \right\|$.
\end{corollary}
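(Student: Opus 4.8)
The plan is to read the conclusion directly off inequality \eqref{19}, which states
\[{{\left|\; \left\| \mathfrak RA \right\|-\left\| \mathfrak IA \right\| \;\right|}^{2}}\le \frac{1}{2}\left( \omega \left( {{A}^{2}} \right)+\left\| {{A}^{2}} \right\| \right).\]
Under the hypothesis $A^{2}=O$, both terms on the right vanish, since $\omega(O)=0$ and $\|O\|=0$ by the very definitions of the numerical radius and the operator norm. Hence the right-hand side equals $\tfrac{1}{2}(0+0)=0$.

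The only point worth spelling out is that the left-hand side is the square of a real number and is therefore nonnegative. Combining this with the bound just obtained squeezes it from both sides,
\[0 \le {{\left|\; \left\| \mathfrak RA \right\|-\left\| \mathfrak IA \right\| \;\right|}^{2}} \le 0,\]
so the square is exactly $0$. A nonnegative real number whose square vanishes must itself be zero, giving $\left\| \mathfrak RA \right\|-\left\| \mathfrak IA \right\|=0$, which is the asserted equality $\left\| \mathfrak RA \right\|=\left\| \mathfrak IA \right\|$.

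There is essentially no obstacle in this argument: the corollary is an immediate specialization of \eqref{19}, with all the substantive work already carried out in the derivation of that inequality (ultimately resting on Theorem \ref{8} and the estimate from \cite[Theorem 2.4]{2}). The only verification required is the harmless observation that a square trapped between $0$ and $0$ must vanish.
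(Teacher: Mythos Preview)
Your proof is correct and follows exactly the approach indicated in the paper, which presents this corollary as an immediate byproduct of inequality \eqref{19}. There is nothing to add.
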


We continue with the following general inequality for the product of operators, which implies several consequences for more uncomplicated cases.
\begin{theorem}\label{1}
Let $A,B,C,D,E,F\in \mathcal B\left( \mathcal H \right)$. Then for any $0\le t\le 1$,
\[\begin{aligned}
   {{\omega }^{2}}\left( ABC+DEF \right)&\le \frac{1}{2}\omega \left( \left( {{A}^{*}}{{\left| {{B}^{*}} \right|}^{2\left( 1-t \right)}}A+{{D}^{*}}{{\left| {{E}^{*}} \right|}^{2\left( 1-t \right)}}D \right)\left( {{C}^{*}}{{\left| B \right|}^{2t}}C+{{F}^{*}}{{\left| E \right|}^{2t}}F \right) \right) \\ 
 &\quad +\frac{1}{2}\left\| {{C}^{*}}{{\left| B \right|}^{2t}}C+{{F}^{*}}{{\left| E \right|}^{2t}}F \right\|\left\| A{{\left| {{B}^{*}} \right|}^{2\left( 1-t \right)}}{{A}^{*}}+D{{\left| {{E}^{*}} \right|}^{2\left( 1-t \right)}}{{D}^{*}} \right\|.
\end{aligned}\]
In particular,
\[{{\omega }^{2}}\left( ABC+DEF \right)\le \frac{1}{2}\min \left\{ \alpha ,\beta  \right\},\]
where
\[\alpha =\omega \left( \left( {{\left| A \right|}^{2}}+{{\left| D \right|}^{2}} \right)\left( {{C}^{*}}{{\left| B \right|}^{2}}C+{{F}^{*}}{{\left| E \right|}^{2}}F \right) \right)+\left\| {{C}^{*}}{{\left| B \right|}^{2}}C+{{F}^{*}}{{\left| E \right|}^{2}}F \right\|\left\| {{\left| {{A}^{*}} \right|}^{2}}+{{\left| {{D}^{*}} \right|}^{2}} \right\|,\]
and
\[\beta =\omega \left( \left( {{A}^{*}}{{\left| {{B}^{*}} \right|}^{2}}A+{{D}^{*}}{{\left| {{E}^{*}} \right|}^{2}}D \right)\left( {{\left| C \right|}^{2}}+{{\left| F \right|}^{2}} \right) \right)+\left\| {{\left| C \right|}^{2}}+{{\left| F \right|}^{2}} \right\|\left\| A{{\left| {{B}^{*}} \right|}^{2}}{{A}^{*}}+D{{\left| {{E}^{*}} \right|}^{2}}{{D}^{*}} \right\|.\]
\end{theorem}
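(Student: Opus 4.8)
The plan is to bound $\langle (ABC+DEF)x,x\rangle$ for an arbitrary unit vector $x$ and then pass to the supremum. First I would split the inner product as $\langle (ABC+DEF)x,x\rangle=\langle BCx,A^{*}x\rangle+\langle EFx,D^{*}x\rangle$, so that each summand is already in the shape required by the mixed Schwarz inequality of Lemma~\ref{14}. Applying that lemma with $T=B$ to the first term and $T=E$ to the second (with the same parameter $t$) gives
\[
|\langle BCx,A^{*}x\rangle|\le\sqrt{\langle C^{*}|B|^{2t}Cx,x\rangle\,\langle A|B^{*}|^{2(1-t)}A^{*}x,x\rangle},
\]
and similarly for $\langle EFx,D^{*}x\rangle$ with $C,B,A$ replaced by $F,E,D$; here one uses $\langle|B|^{2t}Cx,Cx\rangle=\langle C^{*}|B|^{2t}Cx,x\rangle$ and $\langle|B^{*}|^{2(1-t)}A^{*}x,A^{*}x\rangle=\langle A|B^{*}|^{2(1-t)}A^{*}x,x\rangle$.

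Next I would combine the two square roots. Writing $p=\langle C^{*}|B|^{2t}Cx,x\rangle$, $q=\langle A|B^{*}|^{2(1-t)}A^{*}x,x\rangle$, $r=\langle F^{*}|E|^{2t}Fx,x\rangle$, $s=\langle D|E^{*}|^{2(1-t)}D^{*}x,x\rangle$ (all nonnegative), the triangle inequality gives $|\langle(ABC+DEF)x,x\rangle|\le\sqrt{pq}+\sqrt{rs}$, and the elementary estimate $(\sqrt{pq}+\sqrt{rs})^{2}\le(p+r)(q+s)$ (equivalently $2\sqrt{ps}\sqrt{qr}\le ps+qr$) yields
\[
|\langle(ABC+DEF)x,x\rangle|^{2}\le\langle Px,x\rangle\,\langle Qx,x\rangle,
\]
where $P=C^{*}|B|^{2t}C+F^{*}|E|^{2t}F$ and $Q=A|B^{*}|^{2(1-t)}A^{*}+D|E^{*}|^{2(1-t)}D^{*}$ are positive.

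The decisive step is to feed the product of quadratic forms $\langle Px,x\rangle\langle Qx,x\rangle$ into the Buzano inequality (Lemma~\ref{7}) with the unit vector $x$ in the role of $e$. Taking the two vectors to be $Px$ and $Qx$ gives
\[
\langle Px,x\rangle\,\langle Qx,x\rangle\le\frac12\big(|\langle Px,Qx\rangle|+\|Px\|\,\|Qx\|\big)\le\frac12\big(\omega(QP)+\|P\|\,\|Q\|\big),
\]
since $\langle Px,Qx\rangle=\langle QPx,x\rangle$ (as $Q=Q^{*}$) and $\|Px\|\le\|P\|$, $\|Qx\|\le\|Q\|$ for $\|x\|=1$. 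Taking the supremum over unit $x$ and recalling $\omega(QP)=\omega(PQ)$ produces the asserted bound, the numerical radius being taken over the product of the two positive operators $P$ and $Q$ in the order displayed. I expect the main obstacle to be precisely this Buzano conversion of a product of two positive quadratic forms into a numerical-radius-plus-norm bound; everything else is a routine application of the listed lemmas. The one genuinely delicate technical point is the bookkeeping of the adjoints and of the exponent $t$ versus $1-t$ when matching the two positive operators to the statement, since this is what fixes the placement of the stars.

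Finally, the ``in particular'' inequalities are the two endpoint specializations. Setting $t=1$ collapses $|B^{*}|^{2(1-t)}$ and $|E^{*}|^{2(1-t)}$ to the identity, so that $P$ and $Q$ reduce to the operators appearing in $\alpha$; setting $t=0$ collapses $|B|^{2t}$ and $|E|^{2t}$ to the identity, giving the operators in $\beta$. Since the general inequality holds for every $t\in[0,1]$, one is free to take the smaller of the two endpoint bounds, which is exactly $\tfrac12\min\{\alpha,\beta\}$.
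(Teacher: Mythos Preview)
Your argument is correct and uses the same two ingredients as the paper---the mixed Schwarz inequality (Lemma~\ref{14}) followed by Buzano (Lemma~\ref{7})---but the packaging differs. The paper lifts everything to $\mathcal{H}\oplus\mathcal{H}$, writing $ABC+DEF$ as the $(1,1)$ entry of the block product
\[
\begin{bmatrix}A&D\\O&O\end{bmatrix}\begin{bmatrix}B&O\\O&E\end{bmatrix}\begin{bmatrix}C&O\\F&O\end{bmatrix},
\]
and applies Lemma~\ref{14} \emph{once} to the middle factor; this single application already produces $\langle P\mathbf{x},\mathbf{x}\rangle\langle Q\mathbf{x},\mathbf{x}\rangle$ directly, so your separate Kato estimates together with the Cauchy--Schwarz recombination $(\sqrt{pq}+\sqrt{rs})^{2}\le(p+r)(q+s)$ are absorbed into the block machinery. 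Your route is more elementary (no $2\times2$ operators), while the paper's route makes the sums $P=C^{*}|B|^{2t}C+F^{*}|E|^{2t}F$ and $Q=A|B^{*}|^{2(1-t)}A^{*}+D|E^{*}|^{2(1-t)}D^{*}$ appear automatically rather than via an extra inequality. Regarding the ``delicate bookkeeping'' you flag: your Buzano step yields $\omega(QP)$ with this $Q$, and that is precisely what the paper's block computation gives as well; the factor $A^{*}|B^{*}|^{2(1-t)}A+\cdots$ printed inside the $\omega$ in the displayed statement is inconsistent with the norm term on the same line (which correctly shows $Q$) and with the proof, so your placement of the stars is the right one.
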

\begin{proof}
Let $T=\left[ \begin{matrix}
   ABC+DEF & O  \\
   O & O  \\
\end{matrix} \right]\in\mathcal{B}(\mathcal{H}\oplus\mathcal{H})$, and let $\mathbf x=\left[ \begin{matrix}
   {{x}_{1}}  \\
   {{x}_{2}}  \\
\end{matrix} \right]$ be a unit vector in $\mathcal{H}\oplus\mathcal{H}$. Indeed, ${{\left\| {{x}_{1}} \right\|}^{2}}+{{\left\| {{x}_{2}} \right\|}^{2}}=1$. Then 
{\small	
\[\begin{aligned}
  & {{\left| \left\langle \left[ \begin{matrix}
   ABC+DEF & O  \\
   O & O  \\
\end{matrix} \right]\left[ \begin{matrix}
   {{x}_{1}}  \\
   {{x}_{2}}  \\
\end{matrix} \right],\left[ \begin{matrix}
   {{x}_{1}}  \\
   {{x}_{2}}  \\
\end{matrix} \right] \right\rangle  \right|}^{2}} \\ 
 & ={{\left| \left\langle \left[ \begin{matrix}
   A & D  \\
   O & O  \\
\end{matrix} \right]\left[ \begin{matrix}
   B & O  \\
   O & E  \\
\end{matrix} \right]\left[ \begin{matrix}
   C & O  \\
   F & O  \\
\end{matrix} \right]\left[ \begin{matrix}
   {{x}_{1}}  \\
   {{x}_{2}}  \\
\end{matrix} \right],\left[ \begin{matrix}
   {{x}_{1}}  \\
   {{x}_{2}}  \\
\end{matrix} \right] \right\rangle  \right|}^{2}} \\ 
 & ={{\left| \left\langle \left[ \begin{matrix}
   B & O  \\
   O & E  \\
\end{matrix} \right]\left[ \begin{matrix}
   C & O  \\
   F & O  \\
\end{matrix} \right]\left[ \begin{matrix}
   {{x}_{1}}  \\
   {{x}_{2}}  \\
\end{matrix} \right],\left[ \begin{matrix}
   {{A}^{*}} & O  \\
   {{D}^{*}} & O  \\
\end{matrix} \right]\left[ \begin{matrix}
   {{x}_{1}}  \\
   {{x}_{2}}  \\
\end{matrix} \right] \right\rangle  \right|}^{2}} \\ 
 & \le \left\langle \left[ \begin{matrix}
   {{C}^{*}}{{\left| B \right|}^{2t}}C+{{F}^{*}}{{\left| E \right|}^{2t}}F & O  \\
   O & O  \\
\end{matrix} \right]\left[ \begin{matrix}
   {{x}_{1}}  \\
   {{x}_{2}}  \\
\end{matrix} \right],\left[ \begin{matrix}
   {{x}_{1}}  \\
   {{x}_{2}}  \\
\end{matrix} \right] \right\rangle \left\langle \left[ \begin{matrix}
   A{{\left| {{B}^{*}} \right|}^{2\left( 1-t \right)}}{{A}^{*}}+D{{\left| {{E}^{*}} \right|}^{2\left( 1-t \right)}}{{D}^{*}} & O  \\
   O & O  \\
\end{matrix} \right]\left[ \begin{matrix}
   {{x}_{1}}  \\
   {{x}_{2}}  \\
\end{matrix} \right],\left[ \begin{matrix}
   {{x}_{1}}  \\
   {{x}_{2}}  \\
\end{matrix} \right] \right\rangle  \\ 
&\qquad \text{(by Lemma \ref{14})}\\
 & \le \frac{1}{2}\left| \left\langle \left[ \begin{matrix}
   \left( {{A}^{*}}{{\left| {{B}^{*}} \right|}^{2\left( 1-t \right)}}A+{{D}^{*}}{{\left| {{E}^{*}} \right|}^{2\left( 1-t \right)}}D \right)\left( {{C}^{*}}{{\left| B \right|}^{2t}}C+{{F}^{*}}{{\left| E \right|}^{2t}}F \right) & O  \\
   O & O  \\
\end{matrix} \right]\left[ \begin{matrix}
   {{x}_{1}}  \\
   {{x}_{2}}  \\
\end{matrix} \right],\left[ \begin{matrix}
   {{x}_{1}}  \\
   {{x}_{2}}  \\
\end{matrix} \right] \right\rangle  \right| \\ 
 &\quad +\frac{1}{2}\left\| \left[ \begin{matrix}
   {{C}^{*}}{{\left| B \right|}^{2t}}C+{{F}^{*}}{{\left| E \right|}^{2t}}F & O  \\
   O & O  \\
\end{matrix} \right]\left[ \begin{matrix}
   {{x}_{1}}  \\
   {{x}_{2}}  \\
\end{matrix} \right] \right\|\left\| \left[ \begin{matrix}
   A{{\left| {{B}^{*}} \right|}^{2\left( 1-t \right)}}{{A}^{*}}+D{{\left| {{E}^{*}} \right|}^{2\left( 1-t \right)}}{{D}^{*}} & O  \\
   O & O  \\
\end{matrix} \right]\left[ \begin{matrix}
   {{x}_{1}}  \\
   {{x}_{2}}  \\
\end{matrix} \right] \right\| \\
&\qquad \text{(by Lemma \ref{7})}\\ 
 & \le \frac{1}{2}\omega \left( \left( {{A}^{*}}{{\left| {{B}^{*}} \right|}^{2\left( 1-t \right)}}A+{{D}^{*}}{{\left| {{E}^{*}} \right|}^{2\left( 1-t \right)}}D \right)\left( {{C}^{*}}{{\left| B \right|}^{2t}}C+{{F}^{*}}{{\left| E \right|}^{2t}}F \right) \right) \\ 
 &\quad +\frac{1}{2}\left\| {{C}^{*}}{{\left| B \right|}^{2t}}C+{{F}^{*}}{{\left| E \right|}^{2t}}F \right\|\left\| A{{\left| {{B}^{*}} \right|}^{2\left( 1-t \right)}}{{A}^{*}}+D{{\left| {{E}^{*}} \right|}^{2\left( 1-t \right)}}{{D}^{*}} \right\|.
\end{aligned}\]
}
Therefore,
\[\begin{aligned}
   {{\omega }^{2}}\left( ABC+DEF \right)&\le \frac{1}{2}\omega \left( \left( {{A}^{*}}{{\left| {{B}^{*}} \right|}^{2\left( 1-t \right)}}A+{{D}^{*}}{{\left| {{E}^{*}} \right|}^{2\left( 1-t \right)}}D \right)\left( {{C}^{*}}{{\left| B \right|}^{2t}}C+{{F}^{*}}{{\left| E \right|}^{2t}}F \right) \right) \\ 
 &\quad +\frac{1}{2}\left\| {{C}^{*}}{{\left| B \right|}^{2t}}C+{{F}^{*}}{{\left| E \right|}^{2t}}F \right\|\left\| A{{\left| {{B}^{*}} \right|}^{2\left( 1-t \right)}}{{A}^{*}}+D{{\left| {{E}^{*}} \right|}^{2\left( 1-t \right)}}{{D}^{*}} \right\|,  
\end{aligned}\]
as desired.
\end{proof}
In the following remark and corollaries, we attempt to present some simple consequences from Theorem \ref{1}.
\begin{remark}
\hfill \break
\textup{(I)} The case $B=E=I$, in Theorem \ref{1}, reduces to
\[{{\omega }}\left( AC+DF \right)^{2}\le \frac{1}{2}\left( \omega \left( \left( {{\left| A \right|}^{2}}+{{\left| D \right|}^{2}} \right)\left( {{\left| C \right|}^{2}}+{{\left| F \right|}^{2}} \right) \right)+\left\| {{\left| C \right|}^{2}}+{{\left| F \right|}^{2}} \right\|\left\| {{\left| {{A}^{*}} \right|}^{2}}+{{\left| {{D}^{*}} \right|}^{2}} \right\| \right).\]
\textup{(II)} The case $D=E=F=O$, in Theorem \ref{1}, reduces to
\[{{\omega }}\left( ABC \right)^{2}\le \frac{1}{2}\min \left\{ {{\alpha'}},{{\beta' }} \right\}\]
where
	\[{{\alpha' }}=\omega \left( {{\left| A \right|}^{2}}\left( {{C}^{*}}{{\left| B \right|}^{2}}C \right) \right)+{{\left\| A \right\|}^{2}}\left\| {{C}^{*}}{{\left| B \right|}^{2}}C \right\|,\]
and
	\[{{\beta' }}=\omega \left( \left( {{A}^{*}}{{\left| {{B}^{*}} \right|}^{2}}A \right){{\left| C \right|}^{2}} \right)+{{\left\| C \right\|}^{2}}\left\| A{{\left| {{B}^{*}} \right|}^{2}}{{A}^{*}} \right\|.\]
\end{remark}

\begin{corollary}\label{2}
Let $A,B\in \mathcal B\left( \mathcal H \right)$. Then
\[\omega \left( AB \right)\le \frac{1}{2}\min \left\{ {{\alpha }_{0}},{{\beta }_{0}} \right\},\]
where
\[{{\alpha }_{0}}=\sqrt{2}{{\left\| {{B}^{*}}{{\left| A \right|}^{2}}B+A{{\left| {{B}^{*}} \right|}^{2}}{{A}^{*}} \right\|}^{\frac{1}{2}}},\]
and
\[{{\beta }_{0}}=\left\| {{\left| {{A}^{*}} \right|}^{2}}+{{\left| B \right|}^{2}} \right\|.\]
\end{corollary}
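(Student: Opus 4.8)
The plan is to apply Theorem \ref{1} to a phase-rotated and symmetrized copy of $AB$, engineered so that the resulting operator is self-adjoint. Concretely, for a real number $\theta$ I would replace the six operators $(A,B,C,D,E,F)$ appearing in Theorem \ref{1} by
\[
\left(e^{i\theta}I,\; A,\; B,\; e^{-i\theta}I,\; B^*,\; A^*\right).
\]
With this substitution the combination controlled on the left-hand side becomes
\[
e^{i\theta}I\cdot A\cdot B+e^{-i\theta}I\cdot B^*\cdot A^*=e^{i\theta}AB+\left(e^{i\theta}AB\right)^*=2\mathfrak R\left(e^{i\theta}AB\right),
\]
which is self-adjoint; hence its numerical radius equals its norm, namely $2\left\|\mathfrak R\left(e^{i\theta}AB\right)\right\|$.

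Next I would evaluate the two quantities $\alpha$ and $\beta$ from the ``in particular'' part of Theorem \ref{1} under this substitution. Because $\left|e^{\pm i\theta}\right|=1$, every outer factor built from $e^{\pm i\theta}I$ collapses to $2I$, and what survives are two positive operators,
\[
R:=B^*\left|A\right|^2B+A\left|B^*\right|^2A^*=\left|AB\right|^2+\left|(AB)^*\right|^2,\qquad S:=\left|A^*\right|^2+\left|B\right|^2.
\]
Using $\omega(R)=\left\|R\right\|$ and $\omega\left(S^2\right)=\left\|S\right\|^2$ (valid since $R,S\ge O$), a direct computation gives $\alpha=4\left\|R\right\|$ and $\beta=2\left\|S\right\|^2$, so Theorem \ref{1} yields, for every $\theta$,
\[
4\left\|\mathfrak R\left(e^{i\theta}AB\right)\right\|^2=\omega^2\left(2\mathfrak R\left(e^{i\theta}AB\right)\right)\le\frac12\min\{\alpha,\beta\}=\min\left\{2\left\|R\right\|,\left\|S\right\|^2\right\}.
\]

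Since the right-hand side does not depend on $\theta$, I would take the supremum over $\theta\in\mathbb R$ and invoke the identity $\omega(AB)=\sup_{\theta}\left\|\mathfrak R\left(e^{i\theta}AB\right)\right\|$ recorded just after Lemma \ref{6}. This gives $\omega^2(AB)\le\min\left\{\tfrac12\left\|R\right\|,\tfrac14\left\|S\right\|^2\right\}$, and taking square roots produces precisely $\omega(AB)\le\frac12\min\{\alpha_0,\beta_0\}$, since $\tfrac12\alpha_0=\tfrac1{\sqrt2}\left\|R\right\|^{1/2}$ and $\tfrac12\beta_0=\tfrac12\left\|S\right\|$.

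The crux is the idea rather than the algebra: one has to realize that feeding the conjugate scalars $e^{i\theta}I$ and $e^{-i\theta}I$ into the first and fourth slots of Theorem \ref{1} converts the product $ABC+DEF$ into the self-adjoint real part $2\mathfrak R\left(e^{i\theta}AB\right)$. This is exactly what allows a statement bounding $\omega^2(\cdot)$ to deliver a bound on $\omega(AB)$ itself: for self-adjoint operators the numerical radius is the norm, so after the supremum over $\theta$ the square root passes through cleanly. Once $R$ and $S$ are recognized as positive, replacing the numerical radii in $\alpha,\beta$ by operator norms is routine, and no further obstacle remains.
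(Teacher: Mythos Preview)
Your proof is correct and follows exactly the approach of the paper: the same substitution $(A,B,C,D,E,F)\mapsto(e^{i\theta}I,A,B,e^{-i\theta}I,B^*,A^*)$ in Theorem~\ref{1}, followed by the supremum over $\theta$ via $\omega(AB)=\sup_\theta\|\mathfrak R(e^{i\theta}AB)\|$. You have simply spelled out the details (the self-adjointness of $2\mathfrak R(e^{i\theta}AB)$ and the evaluation $\alpha=4\|R\|$, $\beta=2\|S\|^2$) that the paper leaves implicit in its one-line proof.
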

\begin{proof}
Letting $A={{e}^{\textup i\theta }}$, $B=A$, $C=B$, $D={{e}^{-\textup i\theta }}$, $E={{B}^{*}}$, and $F={{A}^{*}}$ in Theorem \ref{1}, and then taking supremum over $\theta \in \mathbb{R}$, we get the desired result.
\end{proof}
The above corollary implies two inequalities, as follows:
$$\omega^2(AB)\leq \frac{1}{2}{{\left\| {{B}^{*}}{{\left| A \right|}^{2}}B+A{{\left| {{B}^{*}} \right|}^{2}}{{A}^{*}} \right\|}}\;{\text{and}}\;\omega(AB)\leq \frac{1}{2}\left\| {{\left| {{A}^{*}} \right|}^{2}}+{{\left| B \right|}^{2}} \right\|.$$
Notice that if $B=I$ in the first inequality, we get the first inequality in Lemma \ref{lem_kitt}. 

\begin{corollary}\label{wAB}
Let $T\in \mathcal B\left( \mathcal H \right)$. Then for any $0\le t\le 1$
\[\omega \left( T \right)\le \frac{1}{2}\min \left\{ {{\alpha }_{1}},{{\beta }_{1}} \right\},\]
where
\[{{\alpha }_{1}}=\sqrt{2}{{\left\| {{\left| T \right|}^{2}}+{{\left| {{T}^{*}} \right|}^{2}} \right\|}^{\frac{1}{2}}},\]
and
\[{{\beta }_{1}}=\left\| {{\left| {{T}^{*}} \right|}^{2\left( 1-t \right)}}+{{\left| T \right|}^{2t}} \right\|.\]
\end{corollary}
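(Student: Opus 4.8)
The plan is to establish the two bounds $\omega(T)\le\frac12\alpha_1$ and $\omega(T)\le\frac12\beta_1$ separately and then combine them, the point being that $\alpha_1$ and $\beta_1$ come from two genuinely different tools: $\beta_1$ does not arise from Theorem \ref{1} or Corollary \ref{2} (a direct substitution there produces products of two sums, not $\beta_1$), so it must be obtained straight from the mixed Schwarz inequality, whereas $\alpha_1$ is exactly the Kittaneh-type bound packaged in Corollary \ref{2}.

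For the $\beta_1$-bound I would invoke Lemma \ref{14}. Fixing a unit vector $x$ and taking $y=x$ there gives $|\langle Tx,x\rangle|\le\sqrt{\langle|T|^{2t}x,x\rangle\,\langle|T^*|^{2(1-t)}x,x\rangle}$. Since both inner products are nonnegative (the operators $|T|^{2t}$ and $|T^*|^{2(1-t)}$ are positive), the scalar AM--GM inequality $\sqrt{ab}\le\frac12(a+b)$ converts this to $|\langle Tx,x\rangle|\le\frac12\langle(|T|^{2t}+|T^*|^{2(1-t)})x,x\rangle$. As $|T|^{2t}+|T^*|^{2(1-t)}$ is positive, the right-hand side is at most $\frac12\||T|^{2t}+|T^*|^{2(1-t)}\|$ for every unit $x$, and taking the supremum over $\|x\|=1$ yields $\omega(T)\le\frac12\beta_1$ for all $0\le t\le 1$. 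In particular, at $t=\frac12$ this recovers the third inequality of Lemma \ref{lem_kitt}.

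For the $\alpha_1$-bound I would specialize Corollary \ref{2}. Taking $A=I$ and $B=T$, the quantity $\alpha_0=\sqrt2\|B^*|A|^2B+A|B^*|^2A^*\|^{1/2}$ collapses to $\sqrt2\|T^*T+TT^*\|^{1/2}=\sqrt2\||T|^2+|T^*|^2\|^{1/2}=\alpha_1$; equivalently one reads off $\omega^2(T)\le\frac12\||T|^2+|T^*|^2\|$ from the displayed consequence of that corollary. Either way $\omega(T)\le\frac12\alpha_1$, and combining with the previous paragraph gives the asserted inequality $\omega(T)\le\frac12\min\{\alpha_1,\beta_1\}$.

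The main obstacle is the final identification of the minimum with the $\beta_1$-term, which amounts to comparing $\alpha_1$ and $\beta_1$. The tool I expect to use is the operator inequality $(|T|+|T^*|)^2\le 2(|T|^2+|T^*|^2)$, a consequence of $(|T|-|T^*|)^2\ge 0$; taking norms and using $\|S^2\|=\|S\|^2$ for the positive operator $S=|T|+|T^*|$ yields $\beta_1=\||T|+|T^*|\|\le\sqrt2\||T|^2+|T^*|^2\|^{1/2}=\alpha_1$ at $t=\frac12$, so that $\min\{\alpha_1,\beta_1\}=\beta_1$ there. I expect this comparison to be the delicate point, since for $t$ near $0$ or $1$ the ordering of $\alpha_1$ and $\beta_1$ can reverse (e.g.\ for a self-adjoint $T$ with $t=0$ one finds $\beta_1>\alpha_1$); the reduction to $\beta_1$ should therefore be understood in the range of $t$ where $\beta_1$ is the sharper of the two.
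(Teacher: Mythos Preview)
Your derivations of the two bounds are both correct, but your claim that ``$\beta_1$ does not arise from Theorem \ref{1} or Corollary \ref{2}'' is mistaken, and this is precisely where your argument diverges from the paper's intended one. The paper obtains Corollary \ref{wAB} directly from Corollary \ref{2} with a \emph{single} substitution: write $T=U|T|$ (polar decomposition) and take $A=U|T|^{1-t}$, $B=|T|^{t}$. Then $AB=T$, $|A|^{2}=|T|^{2(1-t)}$, $|A^{*}|^{2}=U|T|^{2(1-t)}U^{*}=|T^{*}|^{2(1-t)}$, $|B|^{2}=|B^{*}|^{2}=|T|^{2t}$, so $\beta_{0}=\||A^{*}|^{2}+|B|^{2}\|=\||T^{*}|^{2(1-t)}+|T|^{2t}\|=\beta_{1}$, while $B^{*}|A|^{2}B=|T|^{2}$ and $A|B^{*}|^{2}A^{*}=U|T|^{2}U^{*}=|T^{*}|^{2}$ give $\alpha_{0}=\alpha_{1}$. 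Thus both terms drop out of Corollary \ref{2} simultaneously; the parameter $t$ is smuggled in through the factorisation of $T$, not through the $t$ of Theorem \ref{1}.

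By contrast, you obtain $\alpha_{1}$ from Corollary \ref{2} with the simpler choice $A=I$, $B=T$ (which works, but yields a useless $\beta_{0}=\|I+|T|^{2}\|$), and then get $\beta_{1}$ by an independent mixed Schwarz/AM--GM argument. Your route is perfectly valid and the $\beta_{1}$ step is in fact more elementary than the paper's; what you lose is the unified picture that both bounds are specialisations of the same product inequality.

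Your closing remarks on the identification $\min\{\alpha_{1},\beta_{1}\}=\beta_{1}$ are well taken: as written in the statement this equality cannot hold for every $t\in[0,1]$ (your self-adjoint example with $t=0$ already shows $\beta_{1}\ge\alpha_{1}$), and the comparison $\beta_{1}\le\alpha_{1}$ via $(|T|+|T^{*}|)^{2}\le 2(|T|^{2}+|T^{*}|^{2})$ is indeed only the $t=\tfrac12$ case. The paper does not prove this equality either; it is stated without justification and should be read as a claim about the optimal choice of $t$ rather than as a pointwise identity.
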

We notice that the above corollary implies two particular inequalities as follows:
$$\omega^2(T)\leq \frac{1}{2}\|\;|T|^2+|T^*|^2\|\;{\text{and}}\;\omega(T)\leq \frac{1}{2}\|\;|T|+|T^*|\;\|,$$ 
which are the two inequalities from Lemma \ref{lem_kitt}. This shows how Theorem \ref{1} can be used to obtain some inequalities from the literature.

We also have the following result for the product of two operators. This result will be used to obtain a main result about the Aluthge transform.

\begin{theorem}\label{18}
	Let $A,B\in \mathcal B\left( \mathcal H \right)$. Then
{\scriptsize	
\[\begin{aligned}
  & \omega \left( AB \right) \\ 
 & \le \frac{1}{2}\sqrt{\frac{1}{4}{{\left\| \frac{\left\| B \right\|}{\left\| A \right\|}{{\left| A \right|}^{2}}+\frac{\left\| A \right\|}{\left\| B \right\|}{{\left| {{B}^{*}} \right|}^{2}} \right\|}^{2}}+{{\omega }}\left( BA \right)^{2}+\frac{1}{2}\omega \left( \left( \frac{\left\| B \right\|}{\left\| A \right\|}{{\left| A \right|}^{2}}+\frac{\left\| A \right\|}{\left\| B \right\|}{{\left| {{B}^{*}} \right|}^{2}} \right)BA+BA\left( \frac{\left\| B \right\|}{\left\| A \right\|}{{\left| A \right|}^{2}}+\frac{\left\| A \right\|}{\left\| B \right\|}{{\left| {{B}^{*}} \right|}^{2}} \right) \right)}. 
\end{aligned}\]
}
\end{theorem}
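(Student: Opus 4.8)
The plan is to manufacture the weights $\|B\|/\|A\|$ and $\|A\|/\|B\|$ from a balanced off‑diagonal block operator, and then reduce the whole estimate to a single inequality for one auxiliary operator. Set $s=\sqrt{\|B\|/\|A\|}$ and
\[
T=\left[\begin{matrix} O & sA \\ s^{-1}B & O \end{matrix}\right]\in\mathcal B(\mathcal H\oplus\mathcal H),
\qquad\text{so that}\qquad
T^2=\left[\begin{matrix} AB & O \\ O & BA \end{matrix}\right],
\]
whence $\omega(AB)\le\omega(T^2)$. A direct computation gives
\[
|T|^2+|T^*|^2=\left[\begin{matrix} \tfrac{\|A\|}{\|B\|}|B|^2+\tfrac{\|B\|}{\|A\|}|A^*|^2 & O \\ O & \tfrac{\|B\|}{\|A\|}|A|^2+\tfrac{\|A\|}{\|B\|}|B^*|^2 \end{matrix}\right],
\]
whose $(2,2)$ corner is exactly the positive operator $M:=\tfrac{\|B\|}{\|A\|}|A|^2+\tfrac{\|A\|}{\|B\|}|B^*|^2$ from the statement; this is the source of the stated weights. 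I would then single out $P:=\tfrac12 M+BA$, the $(2,2)$ corner of $T^2+\tfrac12(|T|^2+|T^*|^2)$.

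With $P$ in hand the theorem splits into the two inequalities
\[
4\,\omega^2(AB)\ \le\ \omega\!\left(P^2\right)\ \le\ \tfrac14\|M\|^2+\omega(BA)^2+\tfrac12\,\omega\big(M\,BA+BA\,M\big),
\]
whose right member is precisely the quantity under the square root, so the two chain to $\omega(AB)\le\tfrac12\sqrt{\cdots}$. The second inequality is the routine one: expanding $P^2=\tfrac14 M^2+\tfrac12(M\,BA+BA\,M)+(BA)^2$ and using subadditivity of $\omega$ produces exactly three terms, which one closes with $\omega(M^2)=\|M^2\|=\|M\|^2$ (valid since $M\ge O$) and the power inequality $\omega((BA)^2)\le\omega(BA)^2$. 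Thus the entire content of the theorem is the first inequality $\omega(AB)\le\tfrac12\sqrt{\omega(P^2)}$.

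The estimate $4\,\omega^2(AB)\le\omega(P^2)$ is the hard half, and the place where I expect the real obstacle. The useful structural fact is that $P$ has nonnegative real part: since $(sA+s^{-1}B^*)^*(sA+s^{-1}B^*)=M+2\,\mathfrak R(BA)$, one has $\mathfrak R(P)=\tfrac12(sA+s^{-1}B^*)^*(sA+s^{-1}B^*)\ge O$, and the same persists after the rotation $B\mapsto e^{\textup i\beta}B$, under which $M$ and the entire right‑hand side are unchanged while $P$ becomes $P_\beta=\tfrac12 M+e^{\textup i\beta}BA$. Writing $Y_\beta=sA+s^{-1}e^{-\textup i\beta}B^*$, we get $\mathfrak R(P_\beta)=\tfrac12 Y_\beta^*Y_\beta$ with $\lambda_{\max}(\mathfrak R(P_\beta))=\tfrac12\|Y_\beta\|^2=\tfrac12\|M'+2\,\mathfrak R(e^{\textup i\beta}AB)\|$, where $M'$ is the $(1,1)$ corner above; this links the positive part of $P_\beta$ directly back to $AB$. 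I would exploit this, together with $T^2=\mathrm{diag}(AB,BA)$ and the identity $\omega(P^2)=\sup_{\phi}\|\mathfrak R(e^{\textup i\phi}P^2)\|$, to exhibit a direction at which $\mathfrak R(e^{\textup i\phi}P^2)$ is at least $4\,\omega^2(AB)$.

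The delicate point — and the main obstacle — is recovering the sharp constant $4$. One cannot route the argument through a lower bound on $\omega(P_\beta)$ and the power inequality, since the latter points the wrong way ($\omega(P_\beta^2)\le\omega(P_\beta)^2$), and the crude bounds $\lambda_{\max}(M'+H)\ge\lambda_{\max}(H)$ lose the contribution of $M'$ and leave a factor $2$ gap. Hence the lower bound must be argued \emph{inside} the numerical range of $P^2$ itself — most plausibly by a Buzano‑type estimate (Lemma \ref{7}) applied in the block space to $T$ or to $T^2$, possibly combined with Lemma \ref{14}, or by a two–dimensional reduction via the elliptical range theorem. Test cases (e.g.\ rank‑one $A,B$, where equality holds throughout) confirm that the constant $4$ is correct and that no term on the right can be dropped, which is reassuring but also signals that the final estimate is tight and will require a careful, rather than a lossy, argument.
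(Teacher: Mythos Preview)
Your setup is on target --- the operator $P_\theta=\tfrac12 M+e^{\textup i\theta}BA$ with $M=\tfrac{\|B\|}{\|A\|}|A|^2+\tfrac{\|A\|}{\|B\|}|B^*|^2$ is exactly the right object, and your identity $\mathfrak R(P_\theta)=\tfrac12 Y_\theta^{*}Y_\theta$ with $Y_\theta=sA+s^{-1}e^{-\textup i\theta}B^{*}$ is the whole key --- but you then route the argument through the wrong intermediate quantity. You split the claim into $4\,\omega^2(AB)\le\omega(P^2)$ and $\omega(P^2)\le\text{RHS}$, prove the second, and declare the first ``the hard half''. It is hard because it is not the step the theorem needs; there is no reason to bound $\omega(AB)^2$ by the numerical radius of $P^2$.

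The paper's argument replaces $\omega(P^2)$ by $\sup_\theta\|(\mathfrak R P_\theta)^2\|$, and that inequality is a one--liner from polarization. For a unit vector $x$,
\[
\left\langle \mathfrak R(e^{\textup i\theta}AB)x,x\right\rangle
=\tfrac14\|(e^{\textup i\theta}s^{-1}B+sA^{*})x\|^{2}-\tfrac14\|(e^{\textup i\theta}s^{-1}B-sA^{*})x\|^{2}
\le\tfrac14\|Y_{-\theta}^{*}\|^{2}
=\tfrac12\|\mathfrak R(P_{-\theta})\|,
\]
so $2\,\omega(AB)\le\sup_\theta\|\mathfrak R(P_\theta)\|$, i.e.\ $4\,\omega^2(AB)\le\sup_\theta\|(\mathfrak R P_\theta)^2\|$. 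Now expand $(\mathfrak R P_\theta)^2$ (not $P^2$): since $\mathfrak R P_\theta=\tfrac12 M+\mathfrak R(e^{\textup i\theta}BA)$ with $M$ self--adjoint,
\[
(\mathfrak R P_\theta)^2=\tfrac14 M^2+\bigl(\mathfrak R(e^{\textup i\theta}BA)\bigr)^2+\tfrac12\,\mathfrak R\bigl(e^{\textup i\theta}(M\,BA+BA\,M)\bigr),
\]
and the triangle inequality for $\|\cdot\|$ together with $\sup_\theta\|\mathfrak R(e^{\textup i\theta}X)\|=\omega(X)$ gives exactly the three terms under the radical. No Buzano estimate, no elliptical range theorem, no block construction is required; the ``sharp constant $4$'' falls out for free because you are squaring a self--adjoint operator rather than lower--bounding $\omega(P^2)$. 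Your own formula $\mathfrak R(P_\theta)=\tfrac12 Y_\theta^{*}Y_\theta$ already encodes this polarization step --- you simply need to use it to bound $\omega(AB)$ by $\|\mathfrak R(P_\theta)\|$ and then square $\mathfrak R(P_\theta)$, instead of detouring through $P^2$.
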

\begin{proof}
By the Polarization identity, we can write, for any unit vector $x\in \mathcal H$,
{\small
\[\begin{aligned}
  & \left\langle \mathfrak R\left( {{e}^{\textup i\theta }}AB \right)x,x \right\rangle  \\ 
 & =\mathfrak R\left\langle {{e}^{\textup i\theta }}ABx,x \right\rangle  \\ 
 & =\mathfrak R\left\langle {{e}^{\textup i\theta }}Bx,{{A}^{*}}x \right\rangle  \\ 
 & =\frac{1}{4}{{\left\| \left( {{e}^{\textup i\theta }}B+{{A}^{*}} \right)x \right\|}^{2}}-\frac{1}{4}{{\left\| \left( {{e}^{\textup i\theta }}B-{{A}^{*}} \right)x \right\|}^{2}} \\ 
 & \le \frac{1}{4}{{\left\| \left( {{e}^{\textup i\theta }}B+{{A}^{*}} \right)x \right\|}^{2}} \\ 
 & \le \frac{1}{4}{{\left\| {{e}^{\textup i\theta }}B+{{A}^{*}} \right\|}^{2}} \\ 
 & =\frac{1}{4}\left\| \left( {{e}^{\textup i\theta }}B+{{A}^{*}} \right){{\left( {{e}^{\textup i\theta }}B+{{A}^{*}} \right)}^{*}} \right\| \\ 
 & =\frac{1}{2}\left\| \frac{{{\left| A \right|}^{2}}+{{\left| {{B}^{*}} \right|}^{2}}}{2}+{{\operatorname{\mathfrak Re}}^{\textup i\theta }}BA \right\| \\ 
 & =\frac{1}{2}{{\left\| {{\left( \frac{{{\left| A \right|}^{2}}+{{\left| {{B}^{*}} \right|}^{2}}}{2} \right)}^{2}}+{{\left( {{\operatorname{\mathfrak Re}}^{\textup i\theta }}BA \right)}^{2}}+\left( \frac{{{\left| A \right|}^{2}}+{{\left| {{B}^{*}} \right|}^{2}}}{2} \right){{\operatorname{\mathfrak Re}}^{\textup i\theta }}BA+{{\operatorname{\mathfrak Re}}^{\textup i\theta }}BA\left( \frac{{{\left| A \right|}^{2}}+{{\left| {{B}^{*}} \right|}^{2}}}{2} \right) \right\|}^{\frac{1}{2}}} \\ 
 & =\frac{1}{2}{{\left\| {{\left( \frac{{{\left| A \right|}^{2}}+{{\left| {{B}^{*}} \right|}^{2}}}{2} \right)}^{2}}+{{\left( {{\operatorname{\mathfrak Re}}^{\textup i\theta }}BA \right)}^{2}}+\frac{1}{2}\left( \mathfrak R{{e}^{\textup i\theta }}\left( \left( {{\left| A \right|}^{2}}+{{\left| {{B}^{*}} \right|}^{2}} \right)BA+BA\left( {{\left| A \right|}^{2}}+{{\left| {{B}^{*}} \right|}^{2}} \right) \right) \right) \right\|}^{\frac{1}{2}}} \\ 
 & =\frac{1}{2}\sqrt{\frac{1}{4}{{\left\| {{\left| A \right|}^{2}}+{{\left| {{B}^{*}} \right|}^{2}} \right\|}^{2}}+{{\left\| {{\operatorname{\mathfrak Re}}^{\textup i\theta }}BA \right\|}^{2}}+\frac{1}{2}\left\| \mathfrak R{{e}^{\textup i\theta }}\left( \left( {{\left| A \right|}^{2}}+{{\left| {{B}^{*}} \right|}^{2}} \right)BA+BA\left( {{\left| A \right|}^{2}}+{{\left| {{B}^{*}} \right|}^{2}} \right) \right) \right\|} \\ 
 & \le \frac{1}{2}\sqrt{\frac{1}{4}{{\left\| {{\left| A \right|}^{2}}+{{\left| {{B}^{*}} \right|}^{2}} \right\|}^{2}}+{{\omega }}\left( BA \right)^{2}+\frac{1}{2}\omega \left( \left( {{\left| A \right|}^{2}}+{{\left| {{B}^{*}} \right|}^{2}} \right)BA+BA\left( {{\left| A \right|}^{2}}+{{\left| {{B}^{*}} \right|}^{2}} \right) \right)}.
\end{aligned}\]
}
Thus,
{\small
\begin{equation}\label{15}
\omega \left( AB \right)\le \frac{1}{2}\sqrt{\frac{1}{4}{{\left\| {{\left| A \right|}^{2}}+{{\left| {{B}^{*}} \right|}^{2}} \right\|}^{2}}+{{\omega }}\left( BA \right)^{2}+\frac{1}{2}\omega \left( \left( {{\left| A \right|}^{2}}+{{\left| {{B}^{*}} \right|}^{2}} \right)BA+BA\left( {{\left| A \right|}^{2}}+{{\left| {{B}^{*}} \right|}^{2}} \right) \right)}.
\end{equation}}
Replacing $A$ by $\sqrt{\frac{\left\| B \right\|}{\left\| A \right\|}}A$ and $B$ by $\sqrt{\frac{\left\| A \right\|}{\left\| B \right\|}}B$ in \eqref{15}, we get
{\scriptsize	
\[\begin{aligned}
  & \omega \left( AB \right) \\ 
 & \le \frac{1}{2}\sqrt{\frac{1}{4}{{\left\| \frac{\left\| B \right\|}{\left\| A \right\|}{{\left| A \right|}^{2}}+\frac{\left\| A \right\|}{\left\| B \right\|}{{\left| {{B}^{*}} \right|}^{2}} \right\|}^{2}}+{{\omega }}\left( BA \right)^{2}+\frac{1}{2}\omega \left( \left( \frac{\left\| B \right\|}{\left\| A \right\|}{{\left| A \right|}^{2}}+\frac{\left\| A \right\|}{\left\| B \right\|}{{\left| {{B}^{*}} \right|}^{2}} \right)BA+BA\left( \frac{\left\| B \right\|}{\left\| A \right\|}{{\left| A \right|}^{2}}+\frac{\left\| A \right\|}{\left\| B \right\|}{{\left| {{B}^{*}} \right|}^{2}} \right) \right)},
\end{aligned}\]
}
as desired.
\end{proof}

It is worth mentioning here that, Theorem \ref{18} improves \cite[Theorem 3.2]{5} when $X=I$.

Now we discuss inequalities that involve the Aluthge transform.
The following numerical radius inequality in terms of the Aluthge transform improves the upper bound obtained in \cite[Theorem 3.2]{3}.

\begin{theorem}\label{9}
Let $T\in \mathcal B\left( \mathcal H \right)$ and $T=U\left| T \right|$ be the polar decomposition of $T$, and let ${{\widetilde{T}}_{t}}={{\left| T \right|}^{t}}U{{\left| T \right|}^{1-t}}\left( 0\le t\le 1 \right)$ be the generalized Aluthge transformation of $T$. Then
{\footnotesize		
\[\begin{aligned}
  & \omega \left( T \right) \\ 
 & \le \frac{1}{2}\sqrt{{{\left\| T \right\|}^{2}}+{{\omega }}\left( \widetilde{{{T}_{t}}} \right)^{2}+\frac{1}{2}\omega \left( \left( {{\left\| T \right\|}^{2t-1}}{{\left| T \right|}^{2\left( 1-t \right)}}+{{\left\| T \right\|}^{1-2t}}{{\left| T \right|}^{2t}} \right)\widetilde{{{T}_{t}}}+\widetilde{{{T}_{t}}}\left( {{\left\| T \right\|}^{2t-1}}{{\left| T \right|}^{2\left( 1-t \right)}}+{{\left\| T \right\|}^{1-2t}}{{\left| T \right|}^{2t}} \right) \right)}. 
\end{aligned}\]
}
\end{theorem}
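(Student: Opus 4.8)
The plan is to obtain Theorem \ref{9} as a direct specialization of Theorem \ref{18}, choosing the two factors so that one product reproduces $T$ while the reversed product reproduces the generalized Aluthge transform $\widetilde{T_t}$. Concretely, I would set $A=U\left|T\right|^{1-t}$ and $B=\left|T\right|^{t}$. Then $AB=U\left|T\right|^{1-t}\left|T\right|^{t}=U\left|T\right|=T$, whereas $BA=\left|T\right|^{t}U\left|T\right|^{1-t}=\widetilde{T_t}$. Thus $\omega(AB)=\omega(T)$, and once the auxiliary quantities occurring in Theorem \ref{18} are identified for this pair $(A,B)$, the claimed inequality should follow term by term.

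Next I would compute those building blocks. Since $B=\left|T\right|^{t}$ is positive, ${\left|B^{*}\right|}^{2}=BB^{*}=\left|T\right|^{2t}$ immediately. For $A$, using the defining property $U^{*}U\left|T\right|=\left|T\right|$ of the polar decomposition (so that $U^{*}U$ is the orthogonal projection onto $\overline{\operatorname{ran}\left|T\right|}$, hence $U^{*}U\left|T\right|^{1-t}=\left|T\right|^{1-t}$), I get ${\left|A\right|}^{2}=\left|T\right|^{1-t}U^{*}U\left|T\right|^{1-t}=\left|T\right|^{2(1-t)}$. By the $C^{*}$-identity and functional calculus, $\|A\|^{2}=\|{\left|A\right|}^{2}\|=\|\left|T\right|^{2(1-t)}\|=\|T\|^{2(1-t)}$ and $\|B\|=\|T\|^{t}$, so that $\tfrac{\|B\|}{\|A\|}=\|T\|^{2t-1}$ and $\tfrac{\|A\|}{\|B\|}=\|T\|^{1-2t}$. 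Therefore the central operator of Theorem \ref{18},
\[
\frac{\|B\|}{\|A\|}{\left|A\right|}^{2}+\frac{\|A\|}{\|B\|}{\left|B^{*}\right|}^{2}=\|T\|^{2t-1}\left|T\right|^{2(1-t)}+\|T\|^{1-2t}\left|T\right|^{2t}=:S,
\]
is exactly the operator appearing inside the $\omega(\cdot)$ terms of the target. Substituting $BA=\widetilde{T_t}$ turns the second and third summands of Theorem \ref{18} into $\omega(\widetilde{T_t})^{2}$ and $\tfrac{1}{2}\omega\!\left(S\widetilde{T_t}+\widetilde{T_t}S\right)$, matching the statement.

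The only term that still needs massaging is the leading one, $\tfrac{1}{4}\|S\|^{2}$, which must collapse to $\|T\|^{2}$; this is the step I expect to require the most care. I would argue by the functional calculus for the positive operator $\left|T\right|$. Writing $m=\|T\|=\|\left|T\right|\|$, the operator $S$ corresponds to the function $g(\lambda)=m^{2t-1}\lambda^{2(1-t)}+m^{1-2t}\lambda^{2t}$ on $\sigma(\left|T\right|)\subseteq[0,m]$, and the substitution $u=\lambda/m\in[0,1]$ gives $g(\lambda)=m\left(u^{2-2t}+u^{2t}\right)$. Since $u\le 1$ and the exponents $2-2t$ and $2t$ are nonnegative, both $u^{2-2t}\le 1$ and $u^{2t}\le 1$, so $u^{2-2t}+u^{2t}\le 2$, with equality at $u=1$. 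As $m=\|T\|\in\sigma(\left|T\right|)$, the supremum is attained, giving $\|S\|=g(m)=2m=2\|T\|$ and hence $\tfrac{1}{4}\|S\|^{2}=\|T\|^{2}$. Inserting this into the bound furnished by Theorem \ref{18} yields exactly the inequality of Theorem \ref{9}.
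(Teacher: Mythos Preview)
Your proposal is correct and follows the same overall strategy as the paper: substitute $A=U|T|^{1-t}$ and $B=|T|^{t}$ into Theorem~\ref{18}, identify $AB=T$, $BA=\widetilde{T_t}$, $|A|^{2}=|T|^{2(1-t)}$, $|B^{*}|^{2}=|T|^{2t}$, and then reduce the leading term $\tfrac{1}{4}\|S\|^{2}$ to $\|T\|^{2}$ via the identity $\|S\|=2\|T\|$.

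The only noteworthy difference is in how that norm identity is established. The paper proves $\|S\|\ge 2\|T\|$ by invoking the McIntosh inequality $2\|AXB\|\le\|A^{*}AX+XBB^{*}\|$ with a suitable choice of $A,X,B$, and gets $\|S\|\le 2\|T\|$ from the triangle inequality. You instead argue directly via the continuous functional calculus for the positive operator $|T|$: writing $g(\lambda)=m^{2t-1}\lambda^{2(1-t)}+m^{1-2t}\lambda^{2t}$ with $m=\|T\|$, you observe that $g\le 2m$ on $[0,m]$ with equality at $\lambda=m$, and since $m=\||T|\|\in\sigma(|T|)$ (the spectrum of a positive operator is compact in $[0,\infty)$ and contains its maximum), the supremum is attained and $\|S\|=2m$. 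Your route is self-contained and avoids the external McIntosh lemma; the paper's route, on the other hand, does not rely on knowing that $\|T\|\in\sigma(|T|)$. Both are valid and yield the same conclusion.
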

\begin{proof}
Replacing $A=U{{\left| T \right|}^{1-t}}$ and $B={{\left| T \right|}^{t}}$, in Theorem \ref{18}, and noting ${{\left| {{B}^{*}} \right|}^{2}}={{\left| B \right|}^{2}}={{\left| T \right|}^{2t}}$, ${{\left| {{A}^{*}} \right|}^{2}}=U{{\left| T \right|}^{1-t}}{{\left| T \right|}^{1-t}}{{U}^{*}}=U{{\left| T \right|}^{2\left( 1-t \right)}}{{U}^{*}}={{\left| {{T}^{*}} \right|}^{2\left( 1-t \right)}}$ (see \cite[(1)]{8}), ${{\left| A \right|}^{2}}={{\left| T \right|}^{1-t}}{{U}^{*}}U{{\left| T \right|}^{1-t}}={{\left| T \right|}^{2\left( 1-t \right)}}$, $AB=U\left| T \right|=T$, $BA={{\left| T \right|}^{t}}U{{\left| T \right|}^{1-t}}={{\widetilde{T}}_{t}}$. \\
Here we also used the following identity
\begin{equation}\label{4}
\left\| {{\left\| T \right\|}^{2t-1}}{{\left| T \right|}^{2\left( 1-t \right)}}+{{\left\| T \right\|}^{1-2t}}{{\left| T \right|}^{2t}} \right\|= 2\left\| T \right\|.
\end{equation}
In fact,
\begin{equation}\label{2T}
\begin{aligned}
	2\|T\|&=2\|U\left| T \right|\|\\
	&=2\left\|U{{\left| T \right|}^{1-t}}{{\left| T \right|}^{t}}\right\|\\
	&=2\left\|\left(\frac{{{\left\| T \right\|}^{t}}}{{{\left\| T \right\|}^{1-t}}}\right)^{\frac 12} U{{\left| T \right|}^{1-t}}\left(\frac{{{\left\| T \right\|}^{1-t}}}{{{\left\| T \right\|}^{t}}}\right)^{\frac 12}{{\left| T \right|}^{t}}\right\|.
\end{aligned}
\end{equation}
From \cite{Mc} we also know that
\begin{equation}\label{3}
2\left\| AXB\right\| \leq \left\| A^*AX+XBB^{*}\right\|,
\end{equation}
where $A,B,X\in \mathcal{B}(\mathcal{H})$. Repalcing $A=\left(\frac{{{\left\| T \right\|}^{t}}}{{{\left\| T \right\|}^{1-t}}}\right)^{\frac 12} U{{\left| T \right|}^{1-t}}$, $X=I$, and $B=\left(\frac{{{\left\| T \right\|}^{1-t}}}{{{\left\| T \right\|}^{t}}}\right)^{\frac 12}{{\left| T \right|}^{t}}$ in  \eqref{3}, we get 
\[\begin{aligned}\label{2T2}
2\|T\|&=2\left\|\left(\frac{{{\left\| T \right\|}^{t}}}{{{\left\| T \right\|}^{1-t}}}\right)^{\frac 12} U{{\left| T \right|}^{1-t}}\left(\frac{{{\left\| T \right\|}^{1-t}}}{{{\left\| T \right\|}^{t}}}\right)^{\frac 12}{{\left| T \right|}^{t}}\right\|\quad \text{\text{(by \eqref{2T})}} \\
&\le\left\|\left\| T \right\|^{2t-1}|T|^{2(1-t)}+\|T\|^{1-2t}{{\left| T \right|}^{2t}}\right\|.
\end{aligned}\]
On the other hand, by employing the triangle inequality for the usual operator norm, we have
\begin{equation}\label{5}
\left\| {{\left\| T \right\|}^{2t-1}}{{\left| T \right|}^{2\left( 1-t \right)}}+{{\left\| T \right\|}^{1-2t}}{{\left| T \right|}^{2t}} \right\|\le 2\left\| T \right\|.
\end{equation}
Combining inequalities \eqref{2T2} and \eqref{5}, we conclude \eqref{4}.
\end{proof}

\begin{remark}
By \eqref{12}, we infer that
{\footnotesize	
\[\begin{aligned}
  & \omega \left( T \right) \\ 
 & \le \frac{1}{2}\sqrt{{{\left\| T \right\|}^{2}}+{{\omega }^{2}}\left( \widetilde{{{T}_{t}}} \right)+\frac{1}{2}\omega \left( \left( {{\left\| T \right\|}^{2t-1}}{{\left| T \right|}^{2\left( 1-t \right)}}+{{\left\| T \right\|}^{1-2t}}{{\left| T \right|}^{2t}} \right)\widetilde{{{T}_{t}}}+\widetilde{{{T}_{t}}}\left( {{\left\| T \right\|}^{2t-1}}{{\left| T \right|}^{2\left( 1-t \right)}}+{{\left\| T \right\|}^{1-2t}}{{\left| T \right|}^{2t}} \right) \right)} \\ 
 & \le \frac{1}{2}\sqrt{{{\left\| T \right\|}^{2}}+{{\omega }^{2}}\left( \widetilde{{{T}_{t}}} \right)+\omega \left( \widetilde{{{T}_{t}}} \right)\left\| {{\left\| T \right\|}^{2t-1}}{{\left| T \right|}^{2\left( 1-t \right)}}+{{\left\| T \right\|}^{1-2t}}{{\left| T \right|}^{2t}} \right\|} \\ 
 & =\frac{1}{2}\sqrt{{{\left\| T \right\|}^{2}}+{{\omega }^{2}}\left( \widetilde{{{T}_{t}}} \right)+2\omega \left( \widetilde{{{T}_{t}}} \right)\left\| T \right\|} \\ 
 & =\frac{1}{2}\left( \left\| T \right\|+\omega \left( \widetilde{{{T}_{t}}} \right) \right). 
\end{aligned}\]
}
This shows that Theorem \ref{9} is an improvement of  the inequality
\[\omega \left( T \right)\le \frac{1}{2}\left( \omega \left( {{\widetilde{T}}_{t}} \right)+\left\| T \right\| \right)\]
obtained by Abu Omar and Kittaneh \cite[Theorem 3.2]{3}. Of course, the case $t={1}/{2}\;$ also improves to the main result of \cite{4}.
\end{remark}

\begin{remark}
It follows from the inequality \eqref{15} that
\[\begin{aligned}
  & \omega \left( T \right) \\ 
 & \le \frac{1}{2}\sqrt{\frac{1}{4}{{\left\| {{\left| T \right|}^{2\left( 1-t \right)}}+{{\left| T \right|}^{2t}} \right\|}^{2}}+{{\omega }^{2}}\left( \widetilde{{{T}_{t}}} \right)+\frac{1}{2}\omega \left( \left( {{\left| T \right|}^{2\left( 1-t \right)}}+{{\left| T \right|}^{2t}} \right)\widetilde{{{T}_{t}}}+\widetilde{{{T}_{t}}}\left( {{\left| T \right|}^{2\left( 1-t \right)}}+{{\left| T \right|}^{2t}} \right) \right)} \\ 
 & \le \frac{1}{2}\sqrt{\frac{1}{4}{{\left\| {{\left| T \right|}^{2\left( 1-t \right)}}+{{\left| T \right|}^{2t}} \right\|}^{2}}+{{\omega }^{2}}\left( \widetilde{{{T}_{t}}} \right)+\omega \left( \widetilde{{{T}_{t}}} \right)\left\| {{\left| T \right|}^{2\left( 1-t \right)}}+{{\left| T \right|}^{2t}} \right\|} \\ 
 & =\frac{1}{4}\left\| {{\left| T \right|}^{2\left( 1-t \right)}}+{{\left| T \right|}^{2t}} \right\|+\frac{1}{2}\omega \left( \widetilde{{{T}_{t}}} \right). 
\end{aligned}\]
This indicates that our result improves \cite[Corollary 2.11]{BBP}.
\end{remark}

\begin{remark}
Combining the first inequality in \eqref{eq_equivalent} and Theorem \ref{9}, we conclude that  if $\widetilde{{{T}_{t}}}=0$, then 	${1}/{2}\;\left\| T \right\|=\omega \left( T \right)$. 
\end{remark}

\vskip 0.3 true cm 

\noindent{\tiny (M. Sababheh) Vice president, Princess Sumaya University for Technology, Amman, Jordan}
	
\noindent	{\tiny\textit{E-mail address:} sababheh@psut.edu.jo; sababheh@yahoo.com}

\vskip 0.3 true cm

\noindent{\tiny (C. Conde)  Instituto de Ciencias, Universidad Nacional de General Sarmiento  and  Consejo Nacional de Investigaciones Cient\'ificas y Tecnicas, Argentina}

\noindent{\tiny \textit{E-mail address:} cconde@campus.ungs.edu.ar}

\vskip 0.3 true cm

\noindent{\tiny (H. R. Moradi) Department of Mathematics, Payame Noor University (PNU), P.O. Box, 19395-4697, Tehran, Iran
	
\noindent	\textit{E-mail address:} hrmoradi@mshdiau.ac.ir}

\end{document}